\DeclareMathOperator{\Max}{Max}
\DeclareMathOperator{\Min}{Min}
\newcommand{\K}{\mathbf{K}}
\newcommand{\alga}{\mathbf{A}}
\newcommand{\algb}{\mathbf{B}}
\newtheorem{theorem}{Theorem}[section]
\newtheorem{lemma}[theorem]{Lemma}
\newtheorem{proposition}[theorem]{Proposition}
\newtheorem{remark}[theorem]{Remark}
\newtheorem{example}[theorem]{Example}
\newtheorem{corollary}[theorem]{Corollary}
\newcommand{\commJP}[1]{{\color{blue} #1}}
\title{Implication in sharply paraorthomodular and relatively paraorthomodular posets}
\author{Ivan~Chajda, Davide~Fazio, Helmut~L\"anger, Antonio~Ledda and \\
Jan Paseka}
\date{}
\begin{document}


\maketitle

\begin{abstract}
In this paper we show that several classes of partially ordered structures having paraorthomodular reducts, or whose sections may be regarded as paraorthomodular posets, admit a quite natural notion of implication, that admits a suitable notion of adjointness. Within this framework, we propose a smooth generalization of celebrated Greechie's theorems on amalgams of finite Boolean algebras to the realm of Kleene lattices.
\end{abstract}

{\bf AMS Subject Classification:} 03G12, 03G25, 03G10, 03B47, 03B60, 06A11, 06C15

{\bf Keywords:} Paraorthomodular poset, relatively paraorthomodular poset, sharply para\-ortho\-mod\-u\-lar poset, orthogonal poset, Kleene poset, weakly Boolean poset, Kleene lattice, logics of quantum mechanics, orthomodular lattice, orthomodular poset, amalgam of Kleene lattices, atomic amalgam, adjointness, Sasaki implication

\section{Introduction}
Janusz Czelakowski's early contributions concern quantum logic and the foundation of quantum mechanics. More precisely, he deepened Kochen and Specker's results on partial Boolean algebras (PBAs), which are partial algebras that generalize projection operators over separable Hilbert spaces \cite{KochenSpecker2}. Among other achievements, Czelakowski's work provided, on the one hand, a novel characterization of PBAs embeddable into Boolean algebras. On the other hand, it clarified why PBAs can be regarded as the algebraic semantic of a logic in its own right. See e.g.\ \cite{Czela1975a, Czela1975, Czela1979} for details.\\
The interest in partial Boolean algebras is motivated by their coping quite well with indeterminacy arising when non-commuting observables (self-adjoint linear operators) are considered. In fact, partial Boolean algebras are endowed with a binary operation $\lor$ yielding the algebraic counterpart of a partial disjunction which is defined for pairs of commensurable observables. Therefore, although rooted in a common concrete model (i.e.\ projections over Hilbert spaces), Kochen and Specker's approach to the foundation of quantum mechanics differs from Birkhoff and von Neumann's quantum logic \cite{BirkhoffvonNeumann}. In fact, the former avoids interpretation difficulties stemmed by the latter, which assumes  that conjunctions/disjunctions of quantum propositions always exist, even if they concern incompatible observables.\\
Quoting Czelakowski and Suppes,
\begin{quotation}
``They (\emph{Birkhoff and von Neumann}) require that the structure of experimental propositions be a lattice, and thus that the conjunction of two meaningful propositions in the lattice also be in lattice, but.their requirement seems too strict when one
proposition expresses a possible result of measuring position and the other measuring momentum. The fact that the closed linear subspaces of a complex separable Hilbert space form such a lattice is not sufficient, in my view, to maintain that it expresses the more restricted logic of experimental propositions, in spite of the central importance of this lattice of subspaces in the formulation of the theory''. 
\end{quotation}

Indeed, as mentioned in \cite{Czela1975a, Suppes},

\begin{quotation}
 ``What experimental meaning can one attach to the meet and join of two given experimental propositions".
\end{quotation}

Among classes of partial Boolean algebras, a prominent role is played by \emph{transitive} partial Boolean algebras (see e.g.\ \cite{Czela1975a}) which, in turn, are orthomodular posets whose compatibility relation is \emph{regular} (see e.g.\ \cite{B, K, Pulman} for details). 

Let us recall that a bounded poset $\mathbf{P}=(P,\leq,{}',0,1)$ with a complementation which is an antitone involution is called orthomodular if it is orthogonal, i.e.\ $x\vee y$ exists for $x,y\in P$ whenever $x\leq y'$, and it satisfies the orthomodular law
\begin{enumerate}
\item[(OM)] $x\leq y\ \text{implies}\ x\vee(y\wedge x')=y$.
\end{enumerate}
Moreover, if $\mathbf{P}$ is lattice-ordered, i.e.\ it is an \emph{orthomodular lattice}, then (OM) can be expressed in the form of an identity 
\begin{enumerate}
\item[(OI${}_{\vee}$)] $x\vee\big((x\vee y)\wedge x'\big)\approx x\vee y$
\end{enumerate}
and its dual form 
\begin{enumerate}
\item[(OI${}_{\wedge}$)] $x\wedge\big((x\wedge y)\vee x'\big)\approx x\wedge y$.
\end{enumerate}
A different abstract counterpart of the set of quantum events has been suggested within the so-called unsharp approach to quantum theory. Such structures (i.e.\ effect algebras or MV-algebras) represent a suitable  algebraic counterpart of quantum effects, namely bounded self-adjoint linear operators on a Hilbert space that satisfy the Born's rule. However, since the canonical order induced by Born's rule on the set of effects on a Hilbert space is not a lattice in general, effect algebras do not lend themselves to a smooth logico-algebraic investigation. 

For this reason, in 2016 Roberto Giuntini, Francesco Paoli and one of the present authors proposed the notion of paraorthomodular lattice as natural generalization of the concept of orthomodular lattice. These structures find a concrete realization in the set of effects equipped whit Olson's spectral ordering, see e.g.\ \cite{GLP16} or \cite{GLP17}. However, although paraorthomodular lattices allow to overcome difficulties arising from effect algebras framework in a very natural and physically motivated fashion, they may as well rise interesting philosophical questions, paralleling, in some sense, what happens to their ``sharp relatives''.\\
In \cite{CFLLP} paraorthomodular posets were introduced as a framework in which the order--negation reduct of quantum structures ranging from orthomodular lattices, effect algebras and, of course, paraorthomodular lattices, can be treated into a common backdrop. In the same work, a preliminary investigation of sharply paraorthomodular posets was provided. These structures play for paraorthomodular lattices the role that orthomodular posets possess within the theory of orthomodular lattices. Therefore, the natural question arises whether paraorthomodular posets may be regarded as the semantic counterpart of a logic in its own right.\\

In this paper we address the issue of showing that several classes of partially ordered structures having paraorthomodular reducts, or whose ``sections'' (in the sense of section \ref{sec:relativelyparaorthomodular} below) may be regarded as paraorthomodular posets, admit a quite natural notion of implication. \\
In particular, this paper is organized as follows: after dispatching some preliminary notions in section \ref{sec: base}, in section \ref{sec: impl} we discuss a possible smooth notion of implication in sharply paraorthomodular posets. In section \ref{sec: amalgam}, we provide a generalization of celebrated R.~Greechie's theorems \cite{Greechie1} on amalgams of finite Boolean algebras to the realm of Kleene lattices. In sections \ref{sec:relativelyparaorthomodular} and \ref{sec: rel}, we discuss how the theory of paraortomodular posets carries over in two different degrees of freedom: \emph{relatively} paraorthomodular posets and \emph{relatively} paraorthomodular join-semilattices. Finally, in section \ref{sec: adj}, we address the question if there exists a binary connective that forms an adjoint pair with the implication formerly introduced.
\label{sec: base}\section{Basic concepts}

Let $\mathbf P=(P,\leq)$ be a poset, $A,B\subseteq P$ and $a,b\in P$. We define $A\leq B$ if and only if $x\leq y$ for all $x\in A$ and all $y\in B$. Instead of $A\leq\{b\}$, $\{a\}\leq B$ and $\{a\}\leq\{b\}$ we simply write $A\leq b$, $a\leq B$ and $a\leq b$, respectively. The sets
\begin{align*}
L(A) & :=\{x\in P\mid x\leq A\}, \\
U(A) & :=\{x\in P\mid A\leq x\}
\end{align*}
are called the {\em lower cone} and {\em upper cone} of $A$, respectively. Instead of $L(A\cup B)$, $L(A\cup\{b\})$, $L(\{a,b\})$ and $L\big(U(A)\big)$ we simply write $L(A,B)$, $L(A,b)$, $L(a,b)$ and $LU(A)$, respectively. Analogously, we proceed in similar cases. On $2^P$ we introduce four binary relations 
$\leq$, $\leq_1$, $\leq_2$ and $\approx_2$ as follows:
\begin{align*}
	A\leq B & \text{ if and only if for every }x\in A\text{ and for every }y\in B\text{ we have }x\leq y, \\
   A\leq_1B & \text{ if and only if for every }x\in A\text{ there exists some }y\in B\text{ with }x\leq y, \\
   A\leq_2B & \text{ if and only if for every }y\in B\text{ there exists some }x\in A\text{ with }x\leq y, \\
A\approx_2B & \text{ if and only if }A\leq_2B\text{ and }B\leq_2A.
\end{align*}

Recall that $A\leq y$ if and only if $A\leq_1 y$, and similarly $x\leq B$ 
 if and only if $x\leq_2B$. 

A unary operation ${}'$ on $P$ is called an {\em antitone involution} if
\[
x,y\in P\text{ and }x\leq y\text{ together imply }y'\leq x'
\]
and if it satisfies the identity
\[
x''\approx x.
\]
Note that if $\mathbf P$ has an antitone involution $'$ and a bottom element $0$, then it has also a top element $1$, namely $0'$. We will call $x,y\in P$ {\em orthogonal} to each other, shortly $x\perp y$, if $x\leq y'$ or, equivalently, $y\leq x'$.

By an {\em orthogonal poset} we mean a bounded poset $(P,\leq,{}',0,1)$ with an antitone involution satisfying the following condition:
\[
x,y\in P\text{ and }x\perp y\text{ together imply that }x\vee y\text{ exists}.
\]
Here and in the following $x\vee y$ denotes the supremum of $x$ and $y$.

The concept of a paraorthomodular poset was introduced in \cite{CFLLP} as a generalization of the concept of a paraorthomodular lattice introduced in \cite{GLP17}. Recall that a bounded {\em poset} $\mathbf P=(P,\leq,{}',0,1)$ with an antitone involution is called {\em paraorthomodular} if it satisfies the following condition:
\begin{enumerate}
\item[(P)] $x,y\in P$, $x\leq y$ and $x'\wedge y=0$ together imply $x=y$.
\end{enumerate}
It should be remarked that $x'\wedge y=0$ is equivalent to $L(x',y)=0$. The second notation was used in our previous paper \cite{CFLLP}. Moreover, if the paraorthomodular poset $\mathbf P$ is lattice-ordered then it is called a \emph{paraorthomodular lattice}.

Given a poset $\mathbf P$, an element $y \in P$ is said to be a {\em complement} of $x \in P$ if $L(x, y) =L(P)$ and $U(x, y) = U(P)$. $\mathbf P$ is said to be {\em complemented} if each element of $P$ has a complement in $\mathbf P$. Moreover, if $\mathbf P$ is a bounded poset with antitone involution which is orthogonal and ${}'$ is a complementation, then $\mathbf P$ is an orthomodular poset. Namely, as shown e.g.\ in \cite K, for a complementation ${}'$, (P) equivalent to (OM).

An orthogonal paraorthomodular poset is called {\em sharply paraorthomodular} (see \cite{CFLLP}). Of course, any paraorthomodular lattice is sharply paraorthomodular. We call a sharply paraorthomodular poset $\mathbf P$ \emph{regular} provided it satisfies the following identity, for any $x,y\in P$:
\begin{enumerate}\label{reg}
\item[(reg)] $x\land x'\leq y\lor y'$.\footnote{It is worth observing that paraorthomodular lattices defined in \cite{GLP16} coincide with regular paraorthomodular lattices in our framework.}
\end{enumerate}
Note that, by virtue of orthogonality, \eqref{reg} is well-defined. We call a distributive, regular paraorthomodular lattice a \emph{Kleene lattice}.

A {poset} $\mathbf P$ is called {\em distributive} if one of the following equivalent LU-identities is satisfied:
\begin{align*}
	L\big(U(x,y),z\big) & \approx LU\big(L(x,z),L(y,z)\big), \\
	U\big(L(x,z),L(y,z)\big) & \approx UL\big(U(x,y),z\big), \\
	U\big(L(x,y),z\big) & \approx UL\big(U(x,z),U(y,z)\big), \\
	L\big(U(x,z),U(y,z)\big) & \approx LU\big(L(x,y),z\big), \\
	L\big(U(x_1,x_2, \dots,x_n),z\big) & \approx LU\big(L(x_1,z),L(x_2,z), \dots, L(x_n,z)\big),\\
	U\big(L(x_1,x_2, \dots,x_n),z\big) & \approx UL\big(U(x_1,z),U(x_2,z), \dots, U(x_n,z)\big).
\end{align*}

Recall that a {\em Boolean poset} is a bounded distributive complemented poset.
Since the complementation in a Boolean poset $\mathbf P$ is unique and antitone
$\mathbf P$ is a poset with antitone involution.

An orthomodular poset $\mathbf P$ is called {\em weakly Boolean} \cite{tkadlec} if for 
every $a, b \in P$ the condition 
$a\wedge b=a\wedge b'=0$ implies $ a= 0$.

Let $\mathbf P=(P,\leq)$ be a poset. In what follows, for every subset $A$ of $P$ let $\Max A$ and $\Min A$ denote the set of all maximal and minimal elements of $A$, respectively. We say that $\mathbf P$ is
\begin{enumerate}
	\item {\em mub-complete} \cite{abramsky} if for every upper bound $x$
	of a finite  subset $M$ of $P$ there is a minimal upper bound of $M$ below $x$, 
	\item {\em mlb-complete}  if for every lower bound $x$
	of a finite subset $M$ of $P$ there is a maximal lower bound of $M$ above $x$, 
	\item {\em mlub-complete}  if it is both mub-complete and mlb-complete. 
\end{enumerate}

If $\mathbf P=(P,\leq,{}',0,1)$ is a bounded poset with an antitone involution then 
$\mathbf P$ is mub-complete if and only if it is mlb-complete. 

A poset $\mathbf P$ is said to satisfy the {\em Ascending Chain Condition} (ACC) if it has no infinite ascending chains. The notion of {\em Descending Chain Condition} (DCC) is defined dually. If $\mathbf P$ satisfies the ACC then every non-empty subset of $P$ has at least one maximal element. The dual statement holds for the DCC.

Evidently, if $\mathbf P$ satisfies the ACC then $\mathbf P$ is mlb-complete and the dual statement holds for the DCC. Moreover, every finite poset and every lattice is mlub-complete. 

The bounded poset $\mathbf P$ has the {\em maximality property} \cite{tkadlec} if
for every $a, b \in P$ the set $L(a,b)$ has a maximal element. Clearly, 
every mlb-complete poset has the property of maximality. 
Recall also that every weakly Boolean orthomodular poset with the property
of maximality is a Boolean algebra \cite[Theorem 4.2]{tkadlec}.

We sometimes extend an operator $*\colon P^2\rightarrow2^P$ to a binary operation on $2^P$ by
\[
A*B:=\bigcup_{x\in A,y\in B}(x*y)
\]
for all $A,B\in2^P$.

\label{sec: impl}\section{What implication for sharply paraorthomodular posets?}

Let $(P,\leq,{}',0,1)$ be an orthogonal mlb-complete poset. 
If $y\in P$ and $A\subseteq P$ then we will denote the set $\{y\vee a\mid a\in A\}$ by $y\vee A$ provided that all the elements $y\vee a$ exist. Analogously, we proceed in similar cases. Now we introduce the operator $\rightarrow\colon P^2\rightarrow2^P$ as follows:
\begin{enumerate}
\item[(I1)] $x\rightarrow y:=y\vee\Max L(x',y')$.
\end{enumerate}
Since $\Max L(x',y')\leq y'$, $x\rightarrow y$ is correctly defined.

It is worth noticing that every {\em orthomodular poset} (see e.g.\ \cite{CL21a}, \cite{CL22}, \cite{FLP} and \cite F) is paraorthomodular, but there are examples of paraorthomodular posets (or even lattices, indeed) which are not orthomodular, see \cite{GLP17}.

\begin{example}
The paraorthomodular posets depicted in Fig.~1 are neither orthomodular nor sharply paraorthomodular:\\

\vspace*{-4mm}

\begin{center}
\setlength{\unitlength}{7mm}
\begin{picture}(22,8)
\put(3,1){\circle*{.3}}
\put(1,3){\circle*{.3}}
\put(5,3){\circle*{.3}}
\put(1,5){\circle*{.3}}
\put(5,5){\circle*{.3}}
\put(3,7){\circle*{.3}}
\put(3,1){\line(-1,1)2}
\put(3,1){\line(1,1)2}
\put(3,7){\line(-1,-1)2}
\put(3,7){\line(1,-1)2}
\put(1,3){\line(0,1)2}
\put(1,3){\line(2,1)4}
\put(5,3){\line(-2,1)4}
\put(5,3){\line(0,1)2}
\put(2.85,.3){$0$}
\put(.35,2.85){$a$}
\put(5.4,2.85){$b$}
\put(.35,4.85){$b'$}
\put(5.4,4.85){$a'$}
\put(2.3,7.4){$1=0'$}
\put(2.6,-.75){{\rm(a)}}
\put(11,1){\circle*{.3}}
\put(8,3){\circle*{.3}}
\put(10,3){\circle*{.3}}
\put(12,3){\circle*{.3}}
\put(14,3){\circle*{.3}}
\put(8,5){\circle*{.3}}
\put(10,5){\circle*{.3}}
\put(12,5){\circle*{.3}}
\put(14,5){\circle*{.3}}
\put(11,7){\circle*{.3}}
\put(11,1){\line(-3,2)3}
\put(11,1){\line(-1,2)1}
\put(11,1){\line(1,2)1}
\put(11,1){\line(3,2)3}
\put(11,7){\line(-3,-2)3}
\put(11,7){\line(-1,-2)1}
\put(11,7){\line(1,-2)1}
\put(11,7){\line(3,-2)3}
\put(8,3){\line(0,1)2}
\put(8,3){\line(1,1)2}
\put(8,3){\line(2,1)4}
\put(10,3){\line(-1,1)2}
\put(10,3){\line(2,1)4}
\put(12,3){\line(-2,1)4}
\put(12,3){\line(1,1)2}
\put(14,3){\line(-2,1)4}
\put(14,3){\line(-1,1)2}
\put(14,3){\line(0,1)2}
\put(10.85,.3){$0$}
\put(7.35,2.85){$a$}
\put(9.35,2.85){$b$}
\put(12.4,2.85){$c$}
\put(14.4,2.85){$d$}
\put(7.35,4.85){$d'$}
\put(9.35,4.85){$c'$}
\put(12.4,4.85){$b'$}
\put(14.4,4.85){$a'$}
\put(10.3,7.4){$1=0'$}
\put(10.6,-.75){{\rm(b)}}
\put(10.25,-1.55){{\rm Fig.~1}}
\put(8,-2.35){paraorthomodular posets}
\put(19,1){\circle*{.3}}
\put(17,3){\circle*{.3}}
\put(19,3){\circle*{.3}}
\put(21,3){\circle*{.3}}
\put(17,5){\circle*{.3}}
\put(19,5){\circle*{.3}}
\put(21,5){\circle*{.3}}
\put(19,7){\circle*{.3}}
\put(19,1){\line(-1,1)2}
\put(19,1){\line(0,1)6}
\put(19,1){\line(1,1)2}
\put(19,7){\line(-1,-1)2}
\put(19,7){\line(1,-1)2}
\put(17,3){\line(0,1)2}
\put(17,3){\line(1,1)2}
\put(17,3){\line(2,1)4}
\put(19,3){\line(-1,1)2}
\put(19,3){\line(1,1)2}
\put(21,3){\line(-2,1)4}
\put(21,3){\line(-1,1)2}
\put(21,3){\line(0,1)2}
\put(18.85,.3){$0$}
\put(16.35,2.85){$a$}
\put(19.4,2.85){$b$}
\put(21.4,2.85){$c$}
\put(16.35,4.85){$c'$}
\put(19.4,4.85){$b'$}
\put(21.4,4.85){$a'$}
\put(18.3,7.4){$1=0'$}
\put(18.6,-.75){{\rm(c)}}
\end{picture}
\end{center}

\vspace*{15mm}

The posets visualized in Fig.~2 are sharply paraorthomodular:

\vspace*{-4mm}

\begin{center}
\setlength{\unitlength}{7mm}
\begin{picture}(15,8)
\put(3,1){\circle*{.3}}
\put(1,3){\circle*{.3}}
\put(5,3){\circle*{.3}}
\put(1,5){\circle*{.3}}
\put(5,5){\circle*{.3}}
\put(3,7){\circle*{.3}}
\put(3,1){\line(-1,1)2}
\put(3,1){\line(1,1)2}
\put(3,7){\line(-1,-1)2}
\put(3,7){\line(1,-1)2}
\put(1,3){\line(0,1)2}
\put(5,3){\line(0,1)2}
\put(2.85,.3){$0$}
\put(.35,2.85){$a$}
\put(5.4,2.85){$b$}
\put(.35,4.85){$a'$}
\put(5.4,4.85){$b'$}
\put(2.3,7.4){$1=0'$}
\put(2.6,-.75){{\rm(a)}}
\put(11,1){\circle*{.3}}
\put(8,3){\circle*{.3}}
\put(10,3){\circle*{.3}}
\put(12,3){\circle*{.3}}
\put(14,3){\circle*{.3}}
\put(8,5){\circle*{.3}}
\put(10,5){\circle*{.3}}
\put(12,5){\circle*{.3}}
\put(14,5){\circle*{.3}}
\put(11,7){\circle*{.3}}
\put(11,1){\line(-3,2)3}
\put(11,1){\line(-1,2)1}
\put(11,1){\line(1,2)1}
\put(11,1){\line(3,2)3}
\put(11,7){\line(-3,-2)3}
\put(11,7){\line(-1,-2)1}
\put(11,7){\line(1,-2)1}
\put(11,7){\line(3,-2)3}
\put(8,3){\line(1,1)2}
\put(8,3){\line(2,1)4}
\put(10,3){\line(-1,1)2}
\put(10,3){\line(1,1)2}
\put(10,3){\line(2,1)4}
\put(12,3){\line(-2,1)4}
\put(12,3){\line(-1,1)2}
\put(12,3){\line(1,1)2}
\put(14,3){\line(-2,1)4}
\put(14,3){\line(-1,1)2}
\put(10.85,.3){$0$}
\put(7.35,2.85){$a$}
\put(9.35,2.85){$b$}
\put(12.4,2.85){$c$}
\put(14.4,2.85){$d$}
\put(7.35,4.85){$d'$}
\put(9.35,4.85){$c'$}
\put(12.4,4.85){$b'$}
\put(14.4,4.85){$a'$}
\put(10.3,7.4){$1=0'$}
\put(10.6,-.75){{\rm(b)}}
\put(6.25,-1.55){{\rm Fig.~2}}
\put(2.8,-2.35){sharply paraorthomodular posets}
\end{picture}
\end{center}

\vspace*{12mm}

The first one is a lattice, the second one is not. Moreover, the second one is not orthomodular since $b\leq d'$, but $b\vee(d'\wedge b')=b\vee b=b\neq d'$.
\end{example}

We first list several elementary properties of the operator $\rightarrow$ defined by (I1) in orthogonal posets.

\begin{theorem}\label{th1}
Let $(P,\leq,{}',0,1)$ be an orthogonal mlb-complete poset, 
$\rightarrow$ defined by {\rm(I1)} and $x,y\in P$. Then the following hold:
\begin{enumerate}[{\rm(i)}]
\item $y\leq x\rightarrow y$,
\item $x\leq y$ implies $y\rightarrow z\leq_1x\rightarrow z$,
\item $x\rightarrow y\approx\left\{
\begin{array}{ll}
y\vee y'           & \text{ if }x\leq y, \\
y\vee(x'\wedge y') & \text{ if }x\perp y, \\
x'\vee y           & \text{ if }y\leq x,
\end{array}
\right.$
\item $(x\rightarrow y)\rightarrow y\approx y\vee\big(y'\wedge\Min U(x,y)\big)$,
\item $\big((x\rightarrow y)\rightarrow y\big)\rightarrow y\approx y\vee\Big(y'\wedge\big(y\vee\Max L(x',y')\big)\Big)$.
\end{enumerate}
\end{theorem}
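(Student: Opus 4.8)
The plan is to dispatch the five items in increasing order of difficulty, the point being that \textrm{(iv)} and \textrm{(v)} can be reduced to \textrm{(iii)} via the single observation that in both iterated implications the first argument dominates $y$. Item \textrm{(i)} is immediate from (I1): each element of $x\rightarrow y$ has the form $y\vee a$ with $a\in\Max L(x',y')$, and $y\leq y\vee a$, so $y$ lies below the whole set $x\rightarrow y$. For \textrm{(ii)}, write $y\rightarrow z=z\vee\Max L(y',z')$ and $x\rightarrow z=z\vee\Max L(x',z')$. Since $x\leq y$ forces $y'\leq x'$, every $m\in\Max L(y',z')$ is still a lower bound of $\{x',z'\}$; mlb-completeness then supplies a maximal lower bound $\bar m\in\Max L(x',z')$ with $m\leq\bar m$, whence $z\vee m\leq z\vee\bar m$, which is exactly the witness required by $\leq_1$. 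All joins in play exist because the second factor lies under $z'$, so orthogonality applies.

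For \textrm{(iii)} I would split into the three comparability cases. If $x\leq y$ then $y'\leq x'$, so $L(x',y')=L(y')$ has maximum $y'$ and $x\rightarrow y=\{y\vee y'\}$. If $y\leq x$ then dually $L(x',y')=L(x')$ has maximum $x'$, giving $x\rightarrow y=\{x'\vee y\}$, the join existing because $y\leq x=x''$ yields $y\perp x'$. The orthogonal case $x\perp y$ is the one that genuinely uses the standing hypotheses: here $x\vee y$ exists by orthogonality, so its De Morgan dual $x'\wedge y'=(x\vee y)'$ exists and is the maximum of $L(x',y')$, giving $x\rightarrow y=\{y\vee(x'\wedge y')\}$.

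For \textrm{(iv)} the crucial remark is that, by \textrm{(i)}, every $a\in x\rightarrow y$ satisfies $y\leq a$, so the \emph{third} clause of \textrm{(iii)} applies uniformly and yields $a\rightarrow y=a'\vee y$. Writing $a=y\vee m$ with $m\in\Max L(x',y')$ and applying De Morgan gives $a'=y'\wedge m'$, hence $a'\vee y=y\vee(y'\wedge m')$. I would then invoke the order-reversing bijection induced by the antitone involution: $z\in U(x,y)$ if and only if $z'\in L(x',y')$, so $'$ carries $\Min U(x,y)$ bijectively onto $\Max L(x',y')$. Setting $u=m'\in\Min U(x,y)$ turns each summand into $y\vee(y'\wedge u)$, and taking the union over $a\in x\rightarrow y$ (equivalently over $u\in\Min U(x,y)$) produces exactly $y\vee\big(y'\wedge\Min U(x,y)\big)$. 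Existence of $y'\wedge u$ needs no separate argument: it equals $a'$, which exists because $a$ does.

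Part \textrm{(v)} is \textrm{(iv)} run once more. Each element $b$ of $(x\rightarrow y)\rightarrow y$ again dominates $y$, so $b\rightarrow y=b'\vee y$; substituting $b=y\vee(y'\wedge u)$ and applying De Morgan twice gives $b'\vee y=y\vee\big(y'\wedge(y\vee u')\big)$, and replacing $u'=m$ by the corresponding maximal lower bound and unioning recovers $y\vee\big(y'\wedge(y\vee\Max L(x',y'))\big)$. I expect the main obstacle to be bookkeeping rather than conceptual: keeping the extended set-valued operator $\rightarrow$ on $2^P$ straight while repeatedly justifying, through orthogonality and the duality $(x\vee y)'=x'\wedge y'$, that every join and meet written down actually exists, and correctly transporting maximal lower bounds to minimal upper bounds under the involution. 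Once the uniform applicability of the third clause of \textrm{(iii)} is noticed, \textrm{(iv)} and \textrm{(v)} reduce to essentially mechanical De Morgan computations.
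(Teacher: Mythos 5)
Your proof is correct and follows essentially the same route as the paper's: (i)--(iii) by direct case analysis on $\Max L(x',y')$, (ii) by lifting a maximal lower bound of $\{y',z'\}$ to one of $\{x',z'\}$ via mlb-completeness, and (iv)--(v) by the same elementwise De Morgan computation using the antitone bijection between $\Max L(x',y')$ and $\Min U(x,y)$. Your explicit appeal to the third clause of (iii) in parts (iv) and (v) is only a presentational variant of the paper's direct evaluation of $\Max L(z',y')$ for $z\geq y$.
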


Let us note that if ${}'$ is a complementation (e.g., if $\mathbf P$ is orthomodular) then the first line of (iii) can be written in the form
\begin{enumerate}
\item[(iii')] $x\leq y$ implies $x\rightarrow y=1$.
\end{enumerate}
Implication in orthomodular lattices is usually defined by $x\rightarrow y:=y\vee(x'\wedge y')$.

\begin{proof}[Proof of Theorem~\ref{th1}]
\begin{enumerate}[(i)]
\item follows directly from (I1).
\item Assume $x\leq y$ and $z\in P$. Then $y'\leq x'$. Let $u\in y\rightarrow z$. Then there exists some $w\in\Max L(y',z')$ with $z\vee w=u$. Since $y'\leq x'$ we have $w\in L(x',z')$. Thus there exists some $t\in\Max L(x',z')$ with $w\leq t$. Now
\[
u=z\vee w\leq z\vee t\in x\rightarrow z
\]
proving $y\rightarrow z\leq_1x\rightarrow z$.
\item $x\rightarrow y\approx y\vee\Max L(x',y')\approx\left\{
\begin{array}{ll}
y\vee y' & \text{ if }x\leq y, \\
y\vee(x'\wedge y') & \text{ if }x\perp y, \\
y\vee x'=x'\vee y & \text{ if }y\leq x.
\end{array}
\right.$
\item
\begin{align*}
(x\rightarrow y)\rightarrow y & \approx\bigcup_{z\in x\rightarrow y}(z\rightarrow y)\approx\bigcup_{z\in y\vee\Max L(x',y')}\big(y\vee\Max L(z',y')\big)\approx \\
                              & \approx\bigcup_{w\in\Max L(x',y')}\big(y\vee\Max L(y'\wedge w',y')\big)\approx\bigcup_{w\in\Max L(x',y')}\big(y\vee(y'\wedge w')\big)\approx \\
                              & \approx y\vee\big(y'\wedge\Min U(x,y)\big).
\end{align*}
\item According to (iv) and (iii) we have
\begin{align*}
\big((x\rightarrow y)\rightarrow y\big)\rightarrow y & \approx\Big(y\vee\big(y'\wedge\Min U(x,y)\big)\Big)\rightarrow y\approx \\
                                                     & \approx\bigcup_{w\in\Min U(x,y)}\Big(\big(y\vee(y'\wedge w)\big)\rightarrow y\Big)\approx \\
                                                     & \approx\bigcup_{w\in\Min U(x,y)}\Big(\big(y'\wedge(y\vee w')\big)\vee y\Big)\approx \\
																										 & \approx y\vee\Big(y'\wedge(y\vee\Max L(x',y')\big)\Big).
\end{align*}
\end{enumerate}
\end{proof}

If $\mathbf P$ is, moreover, sharply paraorthomodular, we can prove a bit more.

\begin{lemma}
Let $(P,\leq,{}',0,1)$ be a sharply paraorthomodular mlb-complete poset, 
$\rightarrow$ defined by {\rm(I1)} and $a,b\in P$. Then the following hold:
\begin{enumerate}[{\rm(i)}]
\item $b'\rightarrow b=b$.
\item If $a\perp b$ and $b\wedge b'=0$ then $a\rightarrow b=b$ if and only if $a=b'$.
\item $a\rightarrow b=1$ implies $a\leq b$.
\end{enumerate}
\end{lemma}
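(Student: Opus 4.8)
The plan is to unfold the set-valued operator $\rightarrow$ by means of the case analysis in Theorem~\ref{th1}(iii) and then to reduce each of the three claims to a single application of the paraorthomodular law (P). The recurring mechanism is the following: whenever one must show that two comparable elements $u\leq v$ coincide, it suffices by (P) to verify $u'\wedge v=0$; and the existence of all the meets and joins that appear will be guaranteed by orthogonality together with the De Morgan identity $(p\vee q)'=p'\wedge q'$, which holds (and whose right-hand side exists) whenever $p\vee q$ exists.

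For part (i), observe that $b'\perp b$ holds trivially, since $b'\leq b'=(b)'$. Hence the middle line of Theorem~\ref{th1}(iii) applies with $x=b'$ and $y=b$, giving $b'\rightarrow b=b\vee(b''\wedge b')=b\vee(b\wedge b')$; here $b\wedge b'=(b\vee b')'$ exists because $b\vee b'$ does. Since $b\wedge b'\leq b$, absorption yields $b'\rightarrow b=b$. For part (ii) the hypothesis $a\perp b$ again places us in the middle case, so $a\rightarrow b=b\vee(a'\wedge b')$ with $a'\wedge b'=(a\vee b)'$ existing by orthogonality. The right-to-left implication is then immediate from (i). For the converse, $a\rightarrow b=b$ forces $a'\wedge b'\leq b$; combining this with $a'\wedge b'\leq b'$ and the hypothesis $b\wedge b'=0$ gives $a'\wedge b'=0$. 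Since $a\perp b$ means exactly $a\leq b'$, applying (P) to the pair $a\leq b'$ with $a'\wedge b'=0$ delivers $a=b'$.

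For part (iii), I would read $a\rightarrow b=1$ as the statement that $1$ lies in the set $a\rightarrow b=b\vee\Max L(a',b')$, i.e.\ that there is a maximal element $w\in\Max L(a',b')$ with $b\vee w=1$ (such a $w$ is available by mlb-completeness, applied to the lower bound $0$ of $\{a',b'\}$, and $b\vee w$ exists because $w\leq b'$). Passing to complements gives $b'\wedge w'=(b\vee w)'=0$, so (P) applied to $w\leq b'$ forces $w=b'$. But $w\leq a'$, whence $b'\leq a'$ and therefore $a\leq b$, as required.

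The genuinely delicate points, and the ones I would check most carefully, are the bookkeeping about existence of the suprema and infima involved (all of which I expect to dispatch via orthogonality and the De Morgan identity) and the precise set-theoretic reading of the equalities $a\rightarrow b=b$ and $a\rightarrow b=1$, since $\rightarrow$ is \emph{a priori} set-valued; once these are pinned down, the remaining work is simply selecting the correct instance of (P).
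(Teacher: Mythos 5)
Your proof is correct and follows essentially the same route as the paper's: unfold the operator via (I1)/Theorem~\ref{th1}(iii) and reduce each claim to one application of (P), with the same choices of the pair to which (P) is applied in (ii) and (iii). The only cosmetic differences are that in (ii) you work with the element $a'\wedge b'=(a\vee b)'$ where the paper manipulates the set $\Max L(a',b')$ directly, and in (iii) you extract a single witness $w$ where the paper argues for an arbitrary $c\in\Max L(a',b')$; both variants are sound.
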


\begin{proof}
\begin{enumerate}[(i)]
\item 	$b'\rightarrow b=b\vee(b\wedge b')=b$.
\item Assume $a\perp b$ and $b\wedge b'=0$. First suppose $a\rightarrow b=b$. Then $b\vee\Max L(a',b')=a\rightarrow b=b$. Hence $\Max L(a',b')\leq b$. This shows $\Max L(a',b')\leq b\wedge b'=0$, i.e.\ $\Max L(a',b')=0$ and therefore $L(a',b')=0$, i.e.\ $a'\wedge b'=0$ which together with $a\leq b'$ and (P) yields $a=b'$. The other implication follows from (i). 
\item Assume $a\rightarrow b=1$. Let $c\in\Max L(a',b')$. Then $c\leq b'$ and $b\vee c=1$, i.e.\ $c'\wedge b'=0$. Applying (P) we conclude $c=b'$. Hence
\[
b'=c\in\Max L(a',b')\subseteq L(a',b')\subseteq L(a')
\]
and thus $b'\leq a'$, i.e.\ $a\leq b$.
\end{enumerate}
\end{proof}

As shown above, the operator $\rightarrow$ in sharply paraorthomodular posets satisfies several nice properties and hence it can be considered as the connective {\em implication} in the logic formalized by means of paraorthomodular posets (see \cite{CFLLP} for the connection of this logic with the logic of quantum mechanics, see also \cite{FLP} and \cite{GLP17} for the lattice version). Hence the natural question arises whether this logic can be based on this connective only. Contrary to the case of orthomodular posets treated from this point of view in \cite{CL22}, this is not possible since we cannot derive the order $\leq$ by means of the implication $\rightarrow$ only. However, we can get a condition on an orthogonal poset $\mathbf P$ equipped with $\rightarrow$ formulated in this language such that $\mathbf P$ becomes a sharply paraorthomodular one.

\begin{theorem}
Let $\mathbf P=(P,\leq,{}',0,1)$ be an orthogonal mlb-complete poset 
and $\rightarrow$ defined by {\rm(I1)}. Then the following are equivalent:
\begin{enumerate}[{\rm(i)}]
\item $\mathbf P$ is paraorthomodular.
\item If $x\rightarrow y=1$ then $x\leq y$.
\end{enumerate}
\end{theorem}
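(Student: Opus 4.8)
The plan is to prove the two implications separately, observing that (i)$\Rightarrow$(ii) is already in hand while (ii)$\Rightarrow$(i) carries the real content.

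For (i)$\Rightarrow$(ii): since $\mathbf P$ is orthogonal by hypothesis, assuming it paraorthomodular makes it sharply paraorthomodular, and it is mlb-complete. Hence item~(iii) of the Lemma preceding this theorem (``$a\rightarrow b=1$ implies $a\leq b$'') applies verbatim and is exactly statement~(ii). So no new argument is needed in this direction.

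For (ii)$\Rightarrow$(i): I want to verify condition~(P). Suppose $x\leq y$ and $x'\wedge y=0$; the goal is $x=y$. The idea is to feed a carefully chosen implication into the hypothesis. I would compute $y\rightarrow x$. Since $x\leq y$ gives $y'\leq x'$, the lower cone $L(y',x')$ collapses to $L(y')$, whose unique maximal element is $y'$; thus $\Max L(y',x')=\{y'\}$ and $y\rightarrow x=\{x\vee y'\}$ (the join $x\vee y'$ exists by orthogonality, as $x\leq y=y''$ means $x\perp y'$). The key step is then to note that, because $'$ is an antitone involution, it is an order anti-automorphism and therefore sends the existing infimum $x'\wedge y$ to the supremum $x\vee y'$; concretely $(x'\wedge y)'=x''\vee y'=x\vee y'$. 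Since $x'\wedge y=0$, this gives $x\vee y'=0'=1$, so $y\rightarrow x=\{1\}$, i.e.\ $y\rightarrow x=1$. Applying~(ii) with the roles of the two variables interchanged yields $y\leq x$, and together with $x\leq y$ this forces $x=y$. Hence (P) holds and $\mathbf P$ is paraorthomodular.

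The main obstacle is the De Morgan--type passage $(x'\wedge y)'=x\vee y'$: it must be justified purely from the fact that an order-reversing involution converts an \emph{existing} meet into a join, since no lattice structure is available in a general orthogonal poset. A minor but necessary point of bookkeeping is that $\rightarrow$ takes values in $2^P$, so one must confirm that the chosen implication evaluates to the singleton $\{1\}$ and that~(ii) is invoked for the pair $(y,x)$ rather than $(x,y)$.
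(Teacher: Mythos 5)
Your proposal is correct and follows essentially the same route as the paper: the forward direction is the argument of the preceding Lemma (which the paper simply repeats inline rather than citing), and the backward direction is exactly the paper's computation $y\rightarrow x=x\vee y'=(x'\wedge y)'=0'=1$, with your explicit justification of the De Morgan step via the order anti-isomorphism being a welcome but standard clarification.
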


\begin{proof}
(i) $\Rightarrow$ (ii): \\
Assume that $x\rightarrow y=1$. Let $z\in\Max L(x',y')$. Then 
$y\vee z=1$, i.e., $y'\wedge z'=0$.  Since $z\leq y'$ we have according to (P) that 
$z= y'$, i.e., $y'\leq x'$. We conclude  $x\leq y$. \\
(ii) $\Rightarrow$ (i): \\
If $x\leq y$ and $x'\wedge y=0$ then 
\[
y\rightarrow x=y'\vee x=(x'\wedge y)'=0'=1
\]
which implies $y\leq x$ by (ii) and together with $x\leq y$ yields $x=y$.
\end{proof}

In the case of lattices, (I1) reduces to
\begin{enumerate}
\item[(I2)] $x\rightarrow y:=y\vee(x'\wedge y')$.
\end{enumerate}
The next lemma shows that $\rightarrow$ defined by (I2) is antitone in 
the first variable provided the poset in question is a lattice.

\begin{lemma}
Let $(L,\vee,\wedge,{}')$ be a lattice with an antitone mapping ${}'$ and $\rightarrow$ defined by {\rm(I2)}. Then $x\leq y$ implies $y\rightarrow z\leq x\rightarrow z$.
\end{lemma}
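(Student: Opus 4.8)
The plan is to reduce the claim to the monotonicity of the lattice operations $\vee$ and $\wedge$ together with the antitonicity of the unary map ${}'$. Unwinding the definition (I2), the inequality to be established is
\[
z\vee(y'\wedge z')\leq z\vee(x'\wedge z').
\]
Since both sides are joins with the common element $z$, and since $\vee$ is monotone in each argument, it suffices to prove the inner inequality $y'\wedge z'\leq x'\wedge z'$.

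First I would invoke the hypothesis $x\leq y$ and the fact that ${}'$ is antitone to obtain $y'\leq x'$. Meeting both sides with $z'$ and using that $\wedge$ is monotone in its first argument then yields $y'\wedge z'\leq x'\wedge z'$, which is precisely the inequality isolated above. Joining both sides with $z$ and applying monotonicity of $\vee$ once more gives $z\vee(y'\wedge z')\leq z\vee(x'\wedge z')$, i.e.\ $y\rightarrow z\leq x\rightarrow z$, as required.

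There is no genuine obstacle here: the whole argument is a short monotonicity chain, and the only ingredient beyond the order-preservation of $\vee$ and $\wedge$ is the order-reversing behaviour of ${}'$. It is worth noting that, in contrast to the poset setting of Theorem~\ref{th1}(ii)—where the value $x\rightarrow z$ is a \emph{set} and one can only assert the weaker relation $\leq_1$—the lattice hypothesis collapses each $\Max L(\cdot,\cdot)$ to a single element, so the conclusion sharpens to the genuine order inequality $\leq$. Moreover, the argument uses no involutivity of ${}'$, only that it be antitone, which is exactly why the hypothesis is stated for an arbitrary antitone mapping rather than an antitone involution.
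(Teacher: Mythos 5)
Your proof is correct and is essentially the paper's own argument: antitonicity gives $y'\leq x'$, and monotonicity of $\wedge$ and $\vee$ then yields $z\vee(y'\wedge z')\leq z\vee(x'\wedge z')$. You have merely spelled out the monotonicity steps that the paper leaves implicit.
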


\begin{proof}
$x\leq y$ implies $y'\leq x'$ and hence $y\rightarrow z=z\vee(y'\wedge z')\leq z\vee(x'\wedge z')=x\rightarrow z$.
\end{proof}

\section{Amalgams of Kleene lattices}\label{sec: amalgam}
In this section we provide a generalization of celebrated R.~Greechie's theorems \cite{Greechie1} on amalgams of finite Boolean algebras to the realm of Kleene lattices. Specifically, we will show how to obtain sharply paraorthomodular posets and paraorthomodular lattices by ``gluing'' together Kleene lattices pairwise sharing a common atomic subalgebra of at most four elements. Interestingly enough, we will show that \emph{any} amalgam of Kleene lattices $\mathscr{K}$ yields a paraorthomodular poset $\K$. Such a structure is also a sharply paraorthomodular poset (paraorthomodular lattice) provided that its building Kleene blocks do not form loops of order 3 (loops of order 3 or 4). Analogous results have been already obtained for (lattice) effect algebras in terms of pastings of MV algebras, see e.g.\ \cite{Chovanec,xie}.

Let $\mathscr{K}=\{\K_{i}\}_{i\in I}$ be a family of finite Kleene lattices such that $|K_{i}|\geq 6$, for any $i\in I$. Moreover, let $K_{i}\cap K_{j}$ $(i\ne j)$ be either $\{0,1\}$ or an atomic Kleene subalgebra of both $K_{i}$ and $K_{j}$ (on which operations coincide) such that $|K_{i}\cap K_{j}|\leq 4$. Also, we assume that any $x\in K_{i}\cap K_{j}\smallsetminus\{0,1\}$ is either an atom or a co-atom in both $K_{i}$ and $K_{j}$. We call $\mathscr{K}$ a \emph{pasted family} of Kleene lattices, and any $\K_{i}\in\mathscr{K}$ an \emph{initial block}. Note that $\K_{i}\cap\K_{j}\not\cong\K_{3}$, where $\K_{3}$ is the three-elements Kleene lattice, since otherwise one would have $\K_{i}\cong\K_{3}\cong\K_{j}$. In fact, assume by way of contradiction that $\K_{i}\cap\K_{j}\cong\K_{3}$; let $a\in K_{i}\smallsetminus K_{3}$. Clearly one must have $a\parallel b$, where $b$ is the fixed point of $\K_{3}$. By regularity, one must have $0=a\land a'<b$,  and so $\K_{i}$ contains a sublattice isomorphic to $\mathbf{N}_{5}$, which is impossible. Therefore, $\K_{i}\cap\K_{j}$ must be orthoisomorphic either to the two-element Boolean algebra $\algb_{2}$, or to the four-element Boolean algebra $\algb_{4}$, or to the four-element Kleene lattice: 
\vspace*{-4mm}

\begin{center}
\setlength{\unitlength}{7mm}
\begin{picture}(6,8)
\put(3,1){\circle*{.3}}
\put(3,3){\circle*{.3}}
\put(3,5){\circle*{.3}}
\put(3,7){\circle*{.3}}
\put(3,1){\line(0,1)2}
\put(3,3){\line(0,1)2}
\put(3,5){\line(0,1)2}
\put(2.85,.3){$0$}
\put(3.4,2.85){$x$}
\put(3.4,4.85){$x'$}
\put(2.3,7.4){$1=0'$}
\put(2.2,-.7){{\rm Fig.~3}}
\end{picture}
\end{center}

\vspace*{3mm}

Let $\mathscr{K}=\{\K_{i}\}_{i\in I}$ be a pasted family of Kleene lattices. Set ${K}=\bigcup_{i\in I}K_{i}$, and let $\leq\ \subseteq K^{2}$ be such that, for any $x,y\in K$, \[x\leq y\text{ if and only if there exists }i\in I\text{ such that }x\leq^{\K_{i}}y.\] Customary arguments yield the following
\begin{lemma}\label{lem: partordamal}Let $\mathscr{K}=\{\K_{i}\}_{i\in I}$ be a pasted family of Kleene lattices. Then $(K,\leq,0,1)$ is a bounded partially ordered set.
\end{lemma}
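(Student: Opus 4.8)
The plan is to verify directly the four requirements packaged in the statement: that $\leq$ is reflexive, antisymmetric and transitive, and that $0$ and $1$ are respectively its least and greatest elements. Reflexivity and boundedness I would dispose of first, as they are immediate: every $x\in K$ lies in some block $\K_i$, and since each $\K_i$ is itself a bounded poset containing $0$ and $1$, we have $x\leq^{\K_i}x$ and $0\leq^{\K_i}x\leq^{\K_i}1$, whence $x\leq x$ and $0\leq x\leq 1$ by the defining clause of $\leq$.

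For antisymmetry, I would suppose $x\leq y$ and $y\leq x$, witnessed by blocks $\K_i$ and $\K_j$ respectively. The first relation forces $x,y\in K_i$ and the second forces $x,y\in K_j$, so both elements lie in the overlap $K_i\cap K_j$. The key observation is that $K_i\cap K_j$ is a common subalgebra on which the lattice operations of $\K_i$ and $\K_j$ coincide; since the order of a lattice is determined by its operations, the two induced orders agree on $K_i\cap K_j$. Consequently $x\leq^{\K_i}y$ and $y\leq^{\K_i}x$, and antisymmetry inside the genuine poset $\K_i$ yields $x=y$.

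The real content is transitivity. Assuming $x\leq^{\K_i}y$ and $y\leq^{\K_j}z$, the case $i=j$ is immediate from transitivity within $\K_i$, so I would concentrate on $i\neq j$; then $y\in K_i\cap K_j$, and by the pasting hypothesis $y$ is one of $0$, $1$, an atom, or a co-atom of both blocks. I would split into these four cases. If $y=0$ then $x=0$ in $\K_i$, so $x=0\leq^{\K_j}z$; dually, if $y=1$ then $z=1$ in $\K_j$, so $x\leq^{\K_i}1=z$. If $y$ is an atom, then $x\in\{0,y\}$ in $\K_i$, and in either event $x\leq^{\K_j}z$ (directly from $y\leq^{\K_j}z$ when $x=y$, and since $0$ is bottom of $\K_j$ when $x=0$); dually, if $y$ is a co-atom, then $z\in\{y,1\}$ in $\K_j$, and in either event $x\leq^{\K_i}z$. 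In every case a single block witnesses $x\leq z$.

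I expect transitivity to be the main obstacle, precisely because the global relation is defined by an existential quantifier over blocks, so a chain $x\leq y\leq z$ need not be witnessed inside one block. The atom/co-atom restriction on the pairwise overlaps is exactly what collapses the problematic mixed case: an intermediate element shared by two distinct blocks is so extreme---bottom, top, atom, or co-atom---that one of the two comparabilities degenerates, pinning $x$ or $z$ to an endpoint and allowing the whole chain to be re-read inside a single block.
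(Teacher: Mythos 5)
Your proof is correct; the paper itself dismisses this lemma with ``customary arguments'' and gives no proof, and your verification (with transitivity as the only nontrivial point, resolved by the fact that an element shared by two distinct blocks is $0$, $1$, an atom of both, or a co-atom of both, so one of the two comparabilities degenerates) is exactly the argument the authors intend and rely on implicitly in later proofs such as that of Lemma~3.6.
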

The following lemma is obvious
\begin{lemma}\label{lem:antinvamalgams}Let $\mathscr{K}=\{\K_{i}\}_{i\in I}$ be a pasted family of Kleene lattices. Upon defining, for any $x\in K$, $x'=x'^{\K_{i}}$, for some $i\in I$ such that $x\in K_{i}$, one has that $(K,\leq,{}',0,1)$ is a bounded poset with antitone involution. 
\end{lemma}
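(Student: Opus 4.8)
The plan is to verify the three ingredients of the claim—well-definedness of the operation ${}'$, the involutive identity $x''\approx x$, and antitonicity of ${}'$ with respect to the global order—while reading the boundedness and the underlying poset structure off Lemma~\ref{lem: partordamal}. Since that lemma already supplies the bounded poset $(K,\leq,0,1)$, the work reduces to controlling the new unary operation.

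First I would settle well-definedness, which is the only place where the choice of index $i$ in the definition $x'=x'^{\K_i}$ could a priori matter. Suppose $x\in K_i\cap K_j$ with $i\ne j$. By the hypotheses defining a pasted family, $K_i\cap K_j$ is either $\{0,1\}$ or an atomic Kleene subalgebra of both $\K_i$ and $\K_j$ on which all operations—in particular the two complementations—coincide. Hence $x'^{\K_i}=x'^{\K_j}$, so the value $x'$ is independent of the chosen block, and ${}'$ is a well-defined unary operation on $K$. The involutive identity then follows blockwise: fixing $x\in K_i$, closure of $\K_i$ under its complementation gives $x'=x'^{\K_i}\in K_i$, so $x''$ may be computed inside the same block, and the involution law of the Kleene lattice $\K_i$ yields $x''=\bigl(x'^{\K_i}\bigr)'^{\K_i}=x$; well-definedness transfers this conclusion to the amalgam.

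The slightly more delicate—though still routine—step, and the one I expect to carry the real content, is antitonicity with respect to the \emph{global} order, because $\leq$ is defined as a union of the block orders and one must locate a single block witnessing both the order relation and the complementations. Assume $x\leq y$. By the definition of $\leq$ there is an index $i$ with $x\leq^{\K_i}y$, and in particular $x,y\in K_i$. As $\K_i$ is a Kleene lattice, its complementation is antitone, so $y'^{\K_i}\leq^{\K_i}x'^{\K_i}$. Invoking well-definedness to identify $y'=y'^{\K_i}$ and $x'=x'^{\K_i}$, we get $y'\leq^{\K_i}x'$, whence $y'\leq x'$ in the global order. Finally $0'=1$ because $0$ and $1$ are complementary in every block, so $(K,\leq,{}',0,1)$ is a bounded poset with antitone involution. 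The single guiding principle throughout is that every manipulation of ${}'$ must be performed inside one block containing all the relevant elements, with well-definedness used to export the result to $K$.
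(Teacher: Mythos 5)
Your proof is correct; the paper simply declares this lemma obvious and supplies no argument, and your verification (well-definedness via the coincidence of operations on $K_i\cap K_j$, blockwise involutivity, and antitonicity by locating a single block $\K_i$ witnessing $x\leq^{\K_i}y$) is exactly the standard check the authors left implicit.
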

Given a pasted family of Kleene lattices $\mathscr{K}=\{\K_{i}\}_{i\in I}$, we will call the bounded poset with antitone involution $(K,\leq,{}',0,1)$ obtained from a pasted family of Kleene lattices $\mathscr{K}=\{\K_{i}\}_{i\in I}$ as by Lemma \ref{lem: partordamal} and Lemma \ref{lem:antinvamalgams} an \emph{atomic amalgam} of Kleene lattices. Note that, if any $\K_{i}\in\mathscr{K}$ is a Boolean algebra (for any $i\in I$), then our concept coincides with the one developed by R.~Greechie (cf.\ \cite[p.~144]{B}).
The following result shows that, indeed, any atomic amalgam of Kleene lattices yields a paraorthomodular poset.
\begin{lemma}\label{lem: antitoninvolutpartord}Let $\mathscr{K}$ be a pasted family of Kleene lattices, then $\K=(K,\leq,{}',0,1)$ is a paraorthomodular poset. 
\end{lemma}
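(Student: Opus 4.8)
The plan is to lean on Lemma~\ref{lem: partordamal} and Lemma~\ref{lem:antinvamalgams}, which already guarantee that $\K=(K,\leq,{}',0,1)$ is a bounded poset with antitone involution, and then to verify only the defining condition (P) of a paraorthomodular poset. So I would fix $x,y\in K$ with $x\leq y$ and $L(x',y)=\{0\}$ (recall that, in the presence of a bottom element, this is precisely the meaning of $x'\wedge y=0$), and aim to conclude $x=y$.

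The first step is to localise the situation to a single block. By the definition of $\leq$, the relation $x\leq y$ can hold only because there is some $i\in I$ with $x\leq^{\K_{i}}y$; in particular $x,y\in K_{i}$, and since the involution of $\K$ restricts on $K_{i}$ to that of $\K_{i}$ (Lemma~\ref{lem:antinvamalgams}), we also have $x'\in K_{i}$. Thus $x$, $y$ and $x'$ all live in one and the same Kleene block $\K_{i}$.

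The crucial observation is that the meet $m:=x'\wedge^{\K_{i}}y$, which exists because $\K_{i}$ is a lattice, is automatically a common lower bound of $x'$ and $y$ in the whole amalgam: $m\leq^{\K_{i}}x'$ and $m\leq^{\K_{i}}y$ immediately give $m\leq x'$ and $m\leq y$ in $\K$. Hence $m\in L(x',y)=\{0\}$, forcing $m=0$, i.e.\ $x'\wedge^{\K_{i}}y=0$ when computed inside $\K_{i}$. Now a Kleene lattice is by definition a (distributive, regular) paraorthomodular lattice, so $\K_{i}$ itself satisfies (P); applying (P) in $\K_{i}$ to $x\leq^{\K_{i}}y$ together with $x'\wedge^{\K_{i}}y=0$ then yields $x=y$, as required.

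I expect the only genuinely delicate point to be the reassurance that no lower bound coming from a different block can spoil the argument. But this is exactly what the hypothesis $L(x',y)=\{0\}$ already rules out: the lower cone is taken in $\K$, so it contains \emph{every} common lower bound of $x'$ and $y$, those lying in $\K_{i}$ and those lying in any other block alike, and all of them are forced to equal $0$. In particular, no case analysis over the (at most four-element, atomic) overlaps $K_{i}\cap K_{j}$ is needed for this direction; the block meet furnishes precisely the lower bound we must control, and the amalgam hypothesis collapses it to $0$, after which paraorthomodularity of the single block finishes the proof.
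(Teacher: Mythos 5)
Your proof is correct, but it follows a genuinely different route from the paper's. The paper argues by contradiction: if $\K$ failed to be paraorthomodular, then by \cite[Theorem 2.5]{CFLLP} it would contain a strong sub-poset orthoisomorphic to the Benzene ring $\algb_{6}$, and since the comparability $x\leq y$ witnessing that configuration must already hold inside a single block $\K_{i}$, that block would contain a sublattice isomorphic to $\mathbf{N}_{5}$ --- impossible in a distributive lattice. You instead verify condition (P) directly: you localise $x\leq y$ (and hence $x$, $y$, $x'$) to one block $\K_{i}$, observe that the block meet $m=x'\wedge^{\K_{i}}y$ is a common lower bound of $x'$ and $y$ in the whole amalgam and is therefore forced to be $0$ by the hypothesis $L(x',y)=\{0\}$, and then invoke paraorthomodularity of $\K_{i}$. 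Your argument is more self-contained (no appeal to the external characterisation via $\algb_{6}$) and, notably, uses only that each block is a paraorthomodular lattice --- neither distributivity nor regularity enters --- so it would apply verbatim to atomic amalgams of arbitrary paraorthomodular lattices. What the paper's route buys is a uniform reliance on the forbidden-configuration theorem, which it reuses in spirit elsewhere in the section; what yours buys is elementarity and a slightly stronger scope. The one point worth being explicit about (which you do handle) is that $x'$ computed in $\K$ agrees with $x'^{\K_{i}}$, which holds because the involutions of overlapping blocks coincide on their intersection by the definition of a pasted family.
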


\begin{proof}
Of course, $(K,\leq,{}',0,1)$ is a bounded poset with antitone involution by Lemma \ref{lem: partordamal} and Lemma \ref{lem: antitoninvolutpartord}. Now, suppose towards a contradiction that $\K$ is not paraorthomodular. Then, by \cite[Theorem 2.5]{CFLLP}, $\K$ contains a strong sub-poset orthoisomorphic to the Benzene ring $\algb_{6}$. 
\vspace*{-4mm}

\begin{center}
\setlength{\unitlength}{7mm}
\begin{picture}(6,8)
\put(3,1){\circle*{.3}}
\put(1,3){\circle*{.3}}
\put(5,3){\circle*{.3}}
\put(1,5){\circle*{.3}}
\put(5,5){\circle*{.3}}
\put(3,7){\circle*{.3}}
\put(3,1){\line(-1,1)2}
\put(3,1){\line(1,1)2}
\put(3,7){\line(-1,-1)2}
\put(3,7){\line(1,-1)2}
\put(1,3){\line(0,1)2}
\put(5,3){\line(0,1)2}
\put(2.85,.3){$0$}
\put(.35,2.85){$y'$}
\put(5.4,2.85){$x$}
\put(.35,4.85){$x'$}
\put(5.4,4.85){$y$}
\put(2.3,7.4){$1=0'$}
\put(2.2,-.7){{\rm Fig.~4}}
\end{picture}
\end{center}

\vspace*{3mm}

Since $x\leq^{\K_{i}}y$, for some $i\in I$, one has that $\K_{i}$ contains a sublattice isomorphic to $\commJP{\mathbf{N}_{5}}$, which is impossible. 
\end{proof}
\begin{remark}Let $\mathscr{K}=\{\K_{i}\}_{i\in I}$ be a pasted family of Boolean algebras and $J\subseteq I$. Then $(\bigcup_{j\in J}K_{j},\leq,',0,1)$ is a sub-poset with antitone involution of $\K$. 
\end{remark}
\begin{lemma}\label{lem: uniontwoblockpoml}Let $K=\bigcup_{i\in I}K_{i}$ be an atomic amalgam of a pasted family of Kleene lattices $\{\K_{i}\}_{i\in I}$. Then, for any $i,j\in I$ such that $i\ne j$, $K_{i}\cup K_{j}$ is the base set of a paraorthomodular lattice which is isomorphic to the amalgam pasting together the lattices $(K_{i},\land,\lor)$ and $(K_{j},\land,\lor)$ along the pasted sublattice $(K_{i}\cap K_{j},\land,\lor)$.
\end{lemma}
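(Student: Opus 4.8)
The plan is to recognise $K_i\cup K_j$ as the atomic amalgam of the two\nobreakdash-element subfamily $\{\K_i,\K_j\}\subseteq\mathscr{K}$, so that the ``pasting'' named in the statement is produced directly by the constructions of Lemmas \ref{lem: partordamal} and \ref{lem:antinvamalgams}. Since $\{\K_i,\K_j\}$ is itself a pasted family of Kleene lattices, those lemmas make $(K_i\cup K_j,\leq,{}',0,1)$ a bounded poset with antitone involution, and it is by construction the amalgam of $\K_i$ and $\K_j$ along $A:=K_i\cap K_j$, which settles the ``isomorphic to the amalgam pasting'' clause. Moreover Lemma \ref{lem: antitoninvolutpartord}, applied to this two\nobreakdash-block family, immediately gives that it is paraorthomodular. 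Here one uses that a mixed comparability $x\leq y$ with $x\in K_i\setminus K_j$ and $y\in K_j\setminus K_i$ can be realised only through an element of $A$, since no single one of the two blocks contains both $x$ and $y$. Thus the only genuinely new assertion is that this paraorthomodular poset is \emph{lattice-ordered}, and the whole proof reduces to exhibiting binary meets and joins.

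For a pair of elements lying in a common block the supremum and infimum are inherited from that block, using that $A$ is a common sublattice of $\K_i$ and $\K_j$ on which the operations coincide. The substance is therefore the \emph{mixed} case $x\in K_i\setminus A$, $y\in K_j\setminus A$. My plan is to consider the set of shared upper bounds
\[
S:=\{a\in A\mid x\leq a\text{ and }y\leq a\},
\]
which contains $1$ and is hence nonempty. By the preliminary discussion $A$ is orthoisomorphic to $\algb_2$, to $\algb_4$, or to the four-element Kleene chain of Fig.~3, so it is a finite distributive lattice, and meets in $A$ coincide with meets in $\K_i$ and in $\K_j$; consequently $S$ is an up-set closed under the meet of $A$, i.e.\ a filter of a finite lattice, hence principal, so $m:=\bigwedge S\in S$ exists and is the candidate for $x\vee y$. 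Dually, the set of shared lower bounds of $x$ and $y$ is a principal ideal of $A$ whose top is the candidate for $x\wedge y$.

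The hard part is to verify that $m$ is really the \emph{least} upper bound in $K_i\cup K_j$, that is, that it lies below every common upper bound $u$, including those $u\in K_i\setminus A$ (and symmetrically $u\in K_j\setminus A$). Here the atom/co-atom hypothesis on $A$ does the decisive work. If $u\in K_i\setminus A$ bounds the mixed pair, then $y\leq u$ must be witnessed by some $a\in A$ with $y\leq^{\K_j}a\leq^{\K_i}u$; as $y\notin A$ and $y\neq0$, such an $a$ cannot be an atom, so either $a=1$, whence $u=1\geq m$, or $a$ is a co-atom of $\K_j$ and hence --- the pasting axiom fixing the type in both blocks --- a co-atom of $\K_i$. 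In the latter case $x\vee^{\K_i}a\in\{a,1\}\subseteq A$, so it is a shared upper bound of $x$ and $y$ lying below $u$, and minimality of $m$ forces $m\leq u$. Reading the same computation through the involution disposes of joins over co-atoms and meets over atoms, and completes the proof that $K_i\cup K_j$ is a lattice; together with the paraorthomodularity already noted, it is the desired paraorthomodular lattice. I expect this domination step --- controlling the upper bounds that leave the shared subalgebra --- to be the only delicate point, everything else being bookkeeping over the three possibilities for $A$.
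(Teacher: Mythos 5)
Your argument is correct in substance but takes a genuinely different route from the paper's. The paper does not verify the lattice property by hand: it observes that the finite subalgebra $K_i\cap K_j$ is a \emph{closed plexus} in Beran's sense, so that $K_i\cup K_j$ is a \emph{regularly pasted amalgam} of lattices, and then simply invokes \cite[Theorem~3.4]{B} (with joins and meets defined as in the proof of \cite[Lemma~3.3]{B}); paraorthomodularity is then imported from Lemma~\ref{lem: antitoninvolutpartord}. Your proof is essentially a self-contained reconstruction of that cited result in the special case at hand: your $m=\bigwedge S$ is exactly Beran's recipe for the join of a mixed pair, and your ``domination'' step --- pushing an upper bound $u\in K_i\setminus A$ down to a shared upper bound $x\vee^{\K_i}a\in A$ via a co-atom $a$ --- is the content of his lemma specialised to a closed sublattice with at most four elements. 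What your version buys is independence from the reference and a transparent use of the atom/co-atom pasting axiom; what the paper's version buys is brevity and generality (Beran's theorem does not need the explicit case analysis over $\algb_2$, $\algb_4$ and the four-element chain). One point you should state more carefully: your claim that a mixed comparability $x\leq y$ with $x\in K_i\setminus A$, $y\in K_j\setminus A$ must factor through $A$ is valid for the order of the \emph{two-block pasting} named in the statement, where $\leq$ is by construction generated by $\leq^{\K_i}\cup\leq^{\K_j}$; but the reason you give (``no single one of the two blocks contains both'') only excludes one-step witnesses. If one instead read $\leq$ as the order induced from the whole amalgam $\K$, a third block $\K_l$ could witness $y\leq^{\K_l}u$ directly without passing through $A$, and in the presence of an atomic loop of order $3$ the induced order on $K_i\cup K_j$ need not be a lattice order at all. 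So make explicit at the outset that you are working with the internal order of the two-block amalgam, after which the factorisation through $A$ and the rest of your argument go through.
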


\begin{proof}
Note that, since $K_{i}\cap K_{j}$ is finite, it is a closed plexus in the sense of \cite[p.139]{B}. Moreover, $K_{i}\cup K_{j}$ is a regularly pasted amalgam of lattices which, by the first part of \cite[Theorem 3.4]{B} is the universe of a lattice upon defining greatest lower bounds and least upper bounds as in the proof of \cite[Lemma 3.3]{B}. Moreover, by Lemma \ref{lem: antitoninvolutpartord}, $(K_{i}\cup K_{j},\land,\lor,{}',0,1)$ is a paraorthomodular lattice.
\end{proof}

Let $\alga=(A,\leq)$ be a poset. For any $x,y\in A$, we say that $x$ \emph{covers} $y$, written $x\lessdot y$, provided that, for any $z\in A$ one has that $x\leq z\leq y$ implies $x=z$ or $y=z$.

\begin{lemma}\label{lem: cover}Let $\mathscr{K}=\{\K_{i}\}_{i\in I}$ be a pasted family of Kleene lattices and let $x,y\in K$ be such that $x\leq^{\K_{i}}y$ for some $i\in I$. Then the following holds:
	\begin{enumerate}[{\rm(i)}]
		\item If $x\lessdot y$ in $\K$ then $x\lessdot^{\K_{i}}y$ in $\K_{i}$.
		\item If $y\ne x'$  then 
		$x\lessdot y$ in $\K$ if and only if $x\lessdot^{\K_{i}}y$  in $\K_{i}$.
	\end{enumerate}
\end{lemma}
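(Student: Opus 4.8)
The plan is to handle both covering relations through one structural fact: the amalgam order, restricted to any block, recovers the block order. Indeed, if $a,b\in K_{i}$ and $a\leq b$ in $\K$, then by definition $a\leq^{\K_{j}}b$ for some $j$, whence $a,b\in K_{i}\cap K_{j}$, and since the operations on $K_{i}\cap K_{j}$ are common to both blocks we get $a\leq^{\K_{i}}b$. With this in hand, (i) is immediate: if $x\lessdot y$ in $\K$ and $z\in K_{i}$ satisfies $x\leq^{\K_{i}}z\leq^{\K_{i}}y$, then $x\leq z\leq y$ in $\K$, so $z\in\{x,y\}$, giving $x\lessdot^{\K_{i}}y$. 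Since the forward implication of (ii) is exactly (i), everything reduces to the backward implication of (ii): assuming $x\lessdot^{\K_{i}}y$ and $y\neq x'$, I must show that no element lies strictly between $x$ and $y$ in $\K$.

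Suppose towards a contradiction that $x<z<y$ in $\K$. First I would dispose of $z\in K_{i}$: then $x,z,y\in K_{i}$, and the restriction fact yields $x<^{\K_{i}}z<^{\K_{i}}y$, contradicting $x\lessdot^{\K_{i}}y$. So $z\notin K_{i}$. Choosing blocks $K_{k},K_{l}$ with $x\leq^{\K_{k}}z$ and $z\leq^{\K_{l}}y$, the exclusion $z\notin K_{i}$ forces $k,l\neq i$, so that $x\in K_{i}\cap K_{k}$, $y\in K_{i}\cap K_{l}$ and $z\in K_{k}\cap K_{l}$ are shared elements; moreover $z\notin\{0,1\}$ since $x<z<y$. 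The local observations are then: from $x<^{\K_{k}}z$ with $z\neq 1$, $x$ cannot be a coatom of $K_{k}$, so $x$ is $0$ or an atom of $K_{k}$; dually $y$ is $1$ or a coatom of $K_{l}$. Transporting types across shared nontrivial elements (which are atoms in both blocks or coatoms in both), I conclude that $x$ is $0$ or an atom of $K_{i}$, and $y$ is $1$ or a coatom of $K_{i}$.

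Combining these constraints with $x\lessdot^{\K_{i}}y$ splits the argument into three cases. If $x=0$, then $0\lessdot^{\K_{i}}y$ makes $y$ an atom of $K_{i}$, while the above makes it a coatom; but in a finite distributive lattice an element that is simultaneously an atom and a coatom forces at most four elements, since every $w$ satisfies $w\wedge y\in\{0,y\}$ and $w\vee y\in\{y,1\}$, whence $w\in\{0,y,\bar y,1\}$ for the unique complement $\bar y$ of $y$ --- contradicting $|K_{i}|\geq 6$. The case $y=1$ is dual and equally impossible. In the remaining case $x$ is an atom and $y$ a coatom of $K_{i}$ (hence, by type transport, $x$ is an atom of $K_{k}$ and $y$ a coatom of $K_{l}$), and here $y\neq x'$ becomes essential. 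If $k=l$, then $x,z,y$ lie in a common block and $x,y$ form a comparable pair of nontrivial elements of the shared subalgebra $K_{i}\cap K_{k}$; inspecting the admissible shapes $\algb_{2}$, $\algb_{4}$ and the four-element Kleene chain of Fig.~3, only the Kleene chain $0<u<u'<1$ contains a comparable pair of nontrivial elements, forcing $(x,y)=(u,u')$, and there $u'$ is the involution-image of $u$, i.e.\ $y=x'$ --- excluded. If $k\neq l$, then $z$ is a shared nontrivial element of $K_{k}\cap K_{l}$; since $x<^{\K_{k}}z$ with $x$ an atom of $K_{k}$, $z$ cannot be an atom of $K_{k}$, so $z$ is a coatom of both $K_{k}$ and $K_{l}$; but then $z<^{\K_{l}}y$ forces $y=1$, contradicting that $y$ is a coatom.

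The main obstacle is this last case ($x$ an atom, $y$ a coatom): it is precisely here that a genuine cross-block element could sit between $x$ and $y$, and ruling it out needs both the type-consistency of shared atoms and coatoms and a careful split on whether the two witnessing blocks coincide. The side condition $y\neq x'$ is pinned down by the subcase $k=l$, where a Kleene four-chain $0<x<x'<1$ shared with a larger block is exactly the configuration that would otherwise insert an intermediate element; excluding $y=x'$ removes it.
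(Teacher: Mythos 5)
Your proof is correct and follows essentially the same route as the paper's: assume an intermediate $z$ for the converse of (ii), locate $x,y,z$ in shared subalgebras, transport atom/co-atom status across blocks, and split on whether the two witnessing blocks coincide, with the coinciding case forcing $K_i\cap K_k$ to be the four-element Kleene chain and hence $y=x'$. The only cosmetic differences are that you make the block-restriction fact explicit and dispose of the $x=0$ (resp.\ $y=1$) edge case via the ``atom and co-atom forces $|K_i|\le 4$'' count rather than by forcing $z=x$ directly; both are valid.
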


\begin{proof}
\begin{enumerate}[(i)]
\item If $x\lessdot y$ in $\K$ and $x<^{\K_{i}}z<^{\K_{i}}y$ then $x<z<y$ would hold also in $\K$, a contradiction. 
\item Let us suppose that $x\lessdot^{\K_{i}}y$ but $x<z<y$. Therefore, for some $j,k\in I$, one has $x<^{\K_{j}}z<^{\K_{k}}y$. We have $x\in K_{i}\cap K_{j}$, $z\in K_{j}\cap K_{k}$, and $y\in K_{i}\cap K_{k}$. Observe also that $i\ne j$ and $i\ne k$. Let us distinguish several cases depending on the nature of $x$ and $y$. If $x=0$, then $y$ is an atom in $\K_{i}$ and hence also in $\K_{k}$.  Similarly, if $y=1$, then $x$ is a co-atom in $\K_{i}$ and in $\K_{j}$. This means that $z=x$ or $z=y$, a contradiction.	Therefore, let us suppose without loss of generality that $x\ne 0$ and $y\ne 0$. Hence $x$ is an atom in both $\K_{i}$ and $\K_{j}$ and $y$ is a co-atom in both $\K_{i}$ and $\K_{k}$. Now, if $j\ne k$, then $z$ must be either an atom in $\K_{k}$ and hence in $\K_{j}$, and in this case $z=x$, 
	or a co-atom  in $\K_{j}$ and hence in $\K_{k}$, and so $z=y$, which is impossible. So we must have $j=k$. But this means that $\K_{i}\cap\K_{j}=\{0, x, y=x',1\}$, contradicting our assumptions.
\end{enumerate}
\end{proof}

\begin{remark}
Observe that, when one deals with the general case of Kleene lattices, the assumption of Lemma \ref{lem: cover}{\rm(ii)} is necessary. Indeed, let us consider the amalgam of Kleene lattices $\K$
\vspace*{-4mm}

\begin{center}
\setlength{\unitlength}{7mm}
\begin{picture}(6,8)
\put(4,0){\circle*{.3}}
\put(0,4){\circle*{.3}}
\put(8,4){\circle*{.3}}
\put(4,2){\circle*{.3}}
\put(2,4){\circle*{.3}}
\put(6,4){\circle*{.3}}
\put(4,6){\circle*{.3}}
\put(4,8){\circle*{.3}}
\put(4,0){\line(0,1)2}
\put(4,2){\line(-1,1)2}
\put(4,2){\line(-2,1)4}
\put(8,4){\line(-2,1)4}
\put(4,2){\line(1,1)2}
\put(0,4){\line(1,1)4}
\put(4,0){\line(1,1)4}
\put(2,4){\line(1,1)2}
\put(6,4){\line(-1,1)2}
\put(4,6){\line(0,1)2}
\put(4.32,-.3){$0$}
\put(-0.6,3.8){$b$}
\put(8.3,3.8){$b'$}
\put(4.3,1.8){$a$}
\put(1.4,3.8){$c$}
\put(6.3,3.8){$c'$}
\put(4.3,6){$a'$}
\put(4.3,8){$1$}
\put(3.2,-1.2){{\rm Fig.~5}}
\end{picture}
\end{center}

\vspace*{7mm}

Note that $a\lessdot a'$ in the isomorphic copy of $\K_{3}\times\algb_{2}$ $\{0,1,a,a',b,b'\}$, but $a<c<a'$ in $\K$.
\end{remark}
Borrowing the analogous notion for Boolean algebras, given a natural number $n\geq 3$, we say that initial blocks $\K_{i_{1}},\dots,\K_{i_{n}}$ of an atomic amalgam $\K$ form an \emph{atomic loop} of order $n$ for $\K$ provided that the following conditions are satisfied:
\begin{enumerate}
\item For any $1\leq j< n$, $|K_{i_{j}}\cap K_{i_{j+1}}|=4$. Moreover, $|K_{i_{n}}\cap K_{i_{1}}|=4$.
\item For any other pair of indexes $1\leq j,k\leq n$ not mentioned in the above item, $K_{i_{j}}\cap K_{i_{k}}=\{0,1\}$.
\item  For any $1\leq j<k<l<n$, $K_{i_{j}}\cap K_{i_{k}}\cap K_{i_{l}}=\{0,1\}$.
\end{enumerate}
Note that item $(3)$ ensures that ,for any $1\leq j<n$, $\{0,1\}=K_{i_{j}}\cap K_{i_{j+1}}\cap K_{i_{j+2}}=\{0,1,a_{j}\commJP{,}a'_{j}\}\cap \{0,1,a_{j+1}\commJP{,}a'_{j+1}\}$. Reasoning in a similar way one has that the atoms $a_{1},\dots,a_{n}$ must be pairwise distinct. 
\begin{lemma}\label{lem: existence join no loop 3}Let $\mathscr{K}=\{\K_{i}\}_{i\in I}$ be a pasted family of Kleene lattices which does not contain an atomic loop of order $3$. If $x,y\in K_{i}$, then $x\lor^{\K_{i}}y$ is the least upper bound of $x$ and $y$ in $\K$.
\end{lemma}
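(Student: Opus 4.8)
The plan is to establish the two defining halves of ``least upper bound'' for $s:=x\lor^{\K_i}y$, which exists because $\K_i$ is a lattice. That $s$ is an upper bound of $\{x,y\}$ in $\K$ is immediate, since the order of $\K_i$ is a restriction of the order of $\K$. For leastness I would fix an arbitrary $u\in K$ with $x\leq u$ and $y\leq u$ in $\K$ and prove $s\leq u$. By the definition of $\leq$ on $K$ there are indices $j,k\in I$ with $x\leq^{\K_j}u$ and $y\leq^{\K_k}u$. If $u\in K_i$, then $u$ is already an upper bound of $x,y$ inside the lattice $\K_i$, so $s=x\lor^{\K_i}y\leq^{\K_i}u$ and we are done. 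Thus the substance of the argument lies in the case $u\notin K_i$, which forces $j\neq i$ and $k\neq i$ (as $u\in K_j\cap K_k$) and moreover $u\notin\{0,1\}$ (because $0,1\in K_i$).

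Next I would clear away the degenerate positions of $x,y$. If $x=0$ then $s=y\leq u$, and symmetrically for $y=0$; the cases $x=1$ or $y=1$ cannot arise when $u\notin K_i$, since they force $u=1\in K_i$. So I may assume $x,y\notin\{0,1\}$, whence $x\in K_i\cap K_j$ and $y\in K_i\cap K_k$ are nontrivial, making both of these intersections four-element blocks of the form $\{0,1,a,a'\}$. Consider first $j=k$. Then $x,y,u$ all lie in $\K_j$, and since $x,y$ belong to the common subalgebra $K_i\cap K_j$, which is a sublattice of both $\K_i$ and $\K_j$, the joins coincide: $x\lor^{\K_i}y=x\lor^{K_i\cap K_j}y=x\lor^{\K_j}y$. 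As $u$ is an upper bound of $x,y$ in $\K_j$, we get $s=x\lor^{\K_j}y\leq^{\K_j}u$, i.e.\ $s\leq u$.

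The remaining and decisive case is $i,j,k$ pairwise distinct. Here $K_i\cap K_j$ and $K_i\cap K_k$ are four-element as above, and $K_j\cap K_k$ is four-element too because it contains $u\notin\{0,1\}$. If the triple intersection $K_i\cap K_j\cap K_k$ equals $\{0,1\}$, then $\K_i,\K_j,\K_k$ meet exactly the defining conditions of an atomic loop of order $3$, contradicting the hypothesis. Otherwise the triple intersection contains some $e\notin\{0,1\}$; since each pairwise intersection is a four-element subalgebra closed under ${}'$ and already contains $\{0,1,e,e'\}$, all three pairwise intersections coincide with $S:=\{0,1,e,e'\}$. Then $x,y,u\in S$, and because $S$ is a common sublattice the join $s=x\lor^{\K_i}y=x\lor^{S}y$ is computed inside $S$; as $u\in S$ is an upper bound of $x,y$ there, $s\leq^{S}u$, hence $s\leq u$.

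I expect the main obstacle to be the bookkeeping of this last case: one must verify that all three pairwise intersections are genuinely four-element (this is precisely where $u\notin\{0,1\}$ is needed) so that the loop-of-order-$3$ hypothesis becomes applicable, and one must rule out the ``collapsed'' configuration in which the three blocks share a nontrivial element, by observing that it forces all the relevant data into a single four-element subalgebra where the join is unambiguous. The recurring technical lemma underlying every case is that for elements lying in a common atomic subalgebra $K_i\cap K_j$, the joins taken in $\K_i$, in $\K_j$, and in the subalgebra all agree; this is exactly what licenses replacing $x\lor^{\K_i}y$ by a join computed in whichever block contains the witness $u$.
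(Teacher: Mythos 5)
Your proof is correct and follows essentially the same strategy as the paper's: given an upper bound $u$ of $x,y$ witnessed in blocks $\K_j,\K_k$, either two of the indices $i,j,k$ coincide (so the comparison happens inside a single block, via the fact that joins agree on the common four-element subalgebra) or they are pairwise distinct and the no-loop-of-order-$3$ hypothesis yields a contradiction. If anything you are slightly more careful than the paper: you explicitly verify the triple-intersection clause in the definition of an atomic loop by treating the ``collapsed'' case $K_i\cap K_j\cap K_k\supsetneq\{0,1\}$ separately, and you replace the paper's appeal to Lemma~\ref{lem: uniontwoblockpoml} by a direct argument inside the shared subalgebra.
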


\begin{proof}
Let us assume without loss of generality that $x\parallel y$. Let $z\in K$ be such that $x,y\leq z$. 
If $z=1$ then $x\lor^{\K_{i}}y\leq z$. Suppose now that $z\ne 1$. 
	Then there exists $j,k\in I$ such that $x<^{\K_{j}}z$ and $y <^{\K_{k}}z$, 
	$x$ is an atom in  both $\K_{j}$ and $\K_{i}$, $y$ is an atom in  both $\K_{k}$ and $\K_{i}$, and 
	$z$ is a co-atom in  both $\K_{j}$ and $\K_{k}$. If $j,i,k$ are pairwise distinct, then $(\K_{i},\K_{j},\K_{k})$ forms an atomic loop of order $3$. Therefore, one must have $i=j$ or $j=k$. Therefore, by Lemma \ref{lem: uniontwoblockpoml}, $K_{i}\cup K_{j}\cup K_{k}$ 
	forms a paraorthomodular lattice and so we have again $x\lor^{\K_{i}}y\leq z$.
\end{proof}

\begin{remark}\label{rem: kleenehalbo}
Recall that in any Kleene lattice $\K$ it holds that \[x\land y=0\text{ implies }x\leq y'.\]
\end{remark}

\begin{theorem}\label{thm: greechie1}
An atomic amalgam of Kleene lattices is a sharply paraorthomodular poset if and only if it does not contain any loop of order $3$.
\end{theorem}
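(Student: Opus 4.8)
The plan is to notice first that, by Lemma~\ref{lem: antitoninvolutpartord}, every atomic amalgam $\K=(K,\leq,{}',0,1)$ is \emph{already} a paraorthomodular poset; consequently, being \emph{sharply} paraorthomodular amounts to the single additional demand of \emph{orthogonality}, namely that $x\vee y$ exist whenever $x\perp y$. The theorem thus reduces to the equivalence ``$\K$ is orthogonal if and only if $\K$ contains no loop of order~$3$''. The observation underpinning both implications is that orthogonal pairs cannot straddle two blocks: if $x\perp y$, i.e.\ $x\leq y'$, then by the definition of the amalgam order there is $i\in I$ with $x\leq^{\K_i}y'$, so $x,y'\in K_i$, and since $K_i$ is a Kleene lattice it is closed under ${}'$, whence $y=y''\in K_i$ as well. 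Hence every orthogonal pair lives inside a single block $K_i$, where its join $x\vee^{\K_i}y$ exists.

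For the implication ``no loop $\Rightarrow$ orthogonal'' I would simply feed this observation into Lemma~\ref{lem: existence join no loop 3}: for $x\perp y$ with $x,y\in K_i$, that lemma asserts that $x\vee^{\K_i}y$ is the least upper bound of $x$ and $y$ in all of $\K$. Thus $x\vee y$ exists, $\K$ is orthogonal, and together with paraorthomodularity it is sharply paraorthomodular.

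For the converse I would argue contrapositively, producing from a loop of order~$3$ an orthogonal pair with no join. Write the loop as $\K_{i_1},\K_{i_2},\K_{i_3}$ with $K_{i_1}\cap K_{i_2}=\{0,1,a_1,a_1'\}$, $K_{i_2}\cap K_{i_3}=\{0,1,a_2,a_2'\}$ and $K_{i_1}\cap K_{i_3}=\{0,1,a_3,a_3'\}$, the atoms $a_1,a_2,a_3$ being pairwise distinct. Since $a_2,a_3$ are distinct atoms of $K_{i_3}$ we have $a_2\wedge a_3=0$, hence $a_2\perp a_3$ by Remark~\ref{rem: kleenehalbo}; set $s:=a_2\vee^{\K_{i_3}}a_3$. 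I would then note that $a_1'$ is a \emph{second} upper bound of $\{a_2,a_3\}$: as $a_1,a_2$ are distinct atoms of $K_{i_2}$ we get $a_2\leq a_1'$, and as $a_1,a_3$ are distinct atoms of $K_{i_1}$ we get $a_3\leq a_1'$. The goal is to show that $\{a_2,a_3\}$ has no supremum. The reduction I would use is: prove that every upper bound $v$ of $\{a_2,a_3\}$ with $v\leq s$ satisfies $v\in K_{i_3}$, whence $a_2,a_3\leq v\leq s=a_2\vee^{\K_{i_3}}a_3$ forces $v=s$; since one also checks $s\not\leq a_1'$, the element $s$ fails to lie below the upper bound $a_1'$, and therefore no upper bound of $\{a_2,a_3\}$ can be a least one.

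The hard part will be the block bookkeeping inside this reduction, since a priori $v$ need not lie in $K_{i_3}$ and the elements $s,a_1'$ may be shared with further blocks of the amalgam. The leverage is the standing hypothesis $|K_i|\geq 6$ combined with distributivity: for instance $s\neq 1$, because $a_2\vee^{\K_{i_3}}a_3=1$ together with $a_2\wedge a_3=0$ would make $a_2,a_3$ complementary atoms of the distributive lattice $K_{i_3}$, forcing (by uniqueness of complements and annihilation of any third atom) $K_{i_3}=\{0,a_2,a_3,1\}$, contradicting $|K_{i_3}|\geq 6$. Being neither $0$, nor $1$, nor an atom, $s$ can be shared with another block $K_m$ only as a co-atom, which pins the intersection down to $K_{i_3}\cap K_m=\{0,1,s,s'\}$; re-running the intersection conditions $(1)$--$(3)$ defining a loop then either returns $v=s\in K_{i_3}$ or violates those conditions, exactly as in the proof of Lemma~\ref{lem: cover}. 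The same size-$4$ intersection constraints also yield $s\neq a_1'$ and $s\not\leq a_1'$, since $s\in K_{i_3}$ would otherwise place $a_1'$ in $K_{i_1}\cap K_{i_3}=\{0,1,a_3,a_3'\}$, which is incompatible with $a_1,a_3$ being distinct atoms. Closing this case analysis cleanly is, I expect, the only genuinely delicate point.
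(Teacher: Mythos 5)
Your proposal is correct and its skeleton matches the paper's: reduce sharp paraorthomodularity to orthogonality via Lemma~\ref{lem: antitoninvolutpartord}, observe that an orthogonal pair always lies in a single block, and settle the forward direction by Lemma~\ref{lem: existence join no loop 3} exactly as the paper does. The converse is also the same strategy as the paper's (pick two of the three linking atoms of the loop, both lying in one block; use the complement of the third linking atom as a competing upper bound; show the in-block join is the only candidate for the global join and is not below that competing bound), but you implement the key step differently: the paper pins down the candidate join by proving $a_1,a_3\lessdot^{\K_1}a_1\vee^{\K_1}a_3$ via distributivity (no $\mathbf{N}_5$ sublattice) and then invoking the covering Lemma~\ref{lem: cover}, after first disposing of the case $a_1\vee^{\K_1}a_3\in\{a_1',a_3'\}$; you instead do the block-membership bookkeeping directly, using that $s=a_2\vee^{\K_{i_3}}a_3$ is neither $0$, $1$, an atom, nor (unless it is a co-atom) shareable with any other block, so that any upper bound below $s$ is forced back into $K_{i_3}$. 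Your route buys independence from Lemma~\ref{lem: cover} and avoids the paper's preliminary case split, at the cost of a case analysis you only sketch; I checked that the sketch closes (the pasting condition ``shared elements are atoms in both or co-atoms in both'' together with $|K_i|\geq 6$ and distributivity, which rules out $s=1$, does force $v=s$ for every upper bound $v\leq s$, and forces $s\not\leq a_1'$). One phrase should be tightened: it is not ``$s\in K_{i_3}$'' but rather the assumption $s\leq a_1'$ that would place $a_1'$ in $K_{i_3}$ and hence in $K_{i_1}\cap K_{i_3}=\{0,1,a_3,a_3'\}$, yielding $a_1=a_3$; and when you conclude $v=s$ from $a_2,a_3\leq v\leq s$ with $v\in K_{i_3}$, you should note explicitly that order relations between elements of a common block transfer into that block because intersections are common subalgebras on which the operations coincide.
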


\begin{proof}
Suppose that the atomic amalgam $\mathbf K$ of Kleene lattices does not contain an atomic loop of order $3$. For any $x,y\in K$, if $x\leq y'$, then there exists $i\in I$ such that $x,y\in K_{i}$. But then $x\lor y$ exists and equals $x\lor^{\K_{i}}y$, by Lemma \ref{lem: existence join no loop 3}. Moreover, $\K$ is a paraorthomodular poset by Lemma \ref{lem: antitoninvolutpartord} and so it is sharply paraorthomodular. Conversely, let us assume that $\K$ contains an atomic loop of order $3$, say $(\K_{1},\K_{2},\K_{3})$. Assume that $K_{1}\cap K_{2}=\{0,1,a_{1},a'_{1}\}$, $K_{2}\cap K_{3}=\{0,1,a_{2},a'_{2}\}$, $K_{3}\cap K_{1}=\{0,1,a_{3},a'_{3}\}$, where $a_{1},a_{2},a_{3}$ are pairwise distinct atoms. We show that $a_{1}\lor a_{3}$ does not exist. Let us assume by way of contradiction that this supremum exists. Since $a_{1}\land a_{2}=0=a_{2}\land a_{3}$, by Remark \ref{rem: kleenehalbo}, one must have $a_{1}\lor a_{3}\leq a'_{2}$. \\
Now, let us suppose that $a_{1}\lor^{\K_{1}} a_{3}$ equals $a'_{1}$ or $a'_{3}$. Let us assume without loss of generality that $a_{1}\lor^{\K_{1}} a_{3}=a'_{1}$. Then one has $a_{1}\leq^{\K_{i}} a_{1}\lor a_{3}\leq^{\K_{j}}a'_{1}$. If $i=1$ or $j=1$, then one must have $a_{1}\lor a_{3}=a'_{1}$ and so $a'_{1}\leq a'_{2}$, i.e.\ $a_{1}=a_{2}$, which is absurd. So we must have $i\ne 1$ and $j\ne 1$. But then $a_{1}\lor a_{3}$ must be either an atom, or a co-atom, which is equally impossible. Therefore, we can assume without loss of generality that $a_{1}\lor^{\K_{1}} a_{3}$ is not equal to $a'_{1}$ or $a'_{3}$. Note that $a_{1}\lessdot^{\K_{1}}a_{1}\lor^{\K_{1}}a_{3}$. In fact, if $a_{1}<x<a_{1}\lor^{\K_{1}}a_{3}$, then the following would be a sub-lattice of $\K_{1}$:
\vspace*{-4mm}

\begin{center}
\setlength{\unitlength}{7mm}
\begin{picture}(6,8)
\put(3,1){\circle*{.3}}
\put(1,3){\circle*{.3}}
\put(5,4){\circle*{.3}}
\put(1,5){\circle*{.3}}
\put(3,7){\circle*{.3}}
\put(3,1){\line(-1,1)2}
\put(3,1){\line(2,3)2}
\put(3,7){\line(-1,-1)2}
\put(1,3){\line(0,1)2}
\put(5,4){\line(-2,3)2}
\put(2.85,.3){$0$}
\put(.3,2.85){$a_{1}$}
\put(.35,4.85){$x$}
\put(5.3,3.8){$a_{3}$}
\put(2.0,7.4){$a_{1}\lor^{\K_{1}}a_{3}$}
\put(2.2,-.7){{\rm Fig.~6}}
\end{picture}
\end{center}

\vspace*{3mm}

But this contradicts the distributivity of $\K_{1}$. Similarly, $a_{3}\lessdot^{\K_{1}}a_{1}\lor^{\K_{1}}a_{3}$. So, by Lemma \ref{lem: cover}, we must have that $a_{1}, a_{3}\lessdot a_{1}\lor^{\K_{1}}a_{3}$ in $\K$. Hence, we have $a_{1}\lor^{\K_{1}}a_{3}=a_{1}\lor a_{3}\leq^{\K_{j}} a'_{2}$, for some $j\in I$. If $j=1$, then $a'_{2}\in K_{1}\cap K_{2}\cap K_{3}=\{0,1\}$, but this is impossible since $a'_{2}$ is a co-atom. If $i\ne 1$, then  $a_{1}\lor^{\K_{1}}a_{3}\ne a'_{2}$ and it must be a co-atom but this is impossible, since one would have $a'_{2}=1$, which is absurd.
\end{proof}
\begin{theorem}An atomic amalgam of Kleene lattices is a paraorthomodular lattice if and only if it does not contain an atomic loop of order $3$ or $4$.
\end{theorem}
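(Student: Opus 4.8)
The plan is to reduce the statement to a question about the existence of binary suprema. By Lemma~\ref{lem: antitoninvolutpartord}, $\K$ is always a paraorthomodular poset, and such a poset is a paraorthomodular lattice precisely when it is lattice-ordered; since the antitone involution ${}'$ turns suprema into infima, it suffices to prove that all binary joins exist in $\K$ if and only if $\K$ contains no atomic loop of order $3$ or $4$.

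For the implication from ``no loops'' to ``lattice'', I would fix $x,y\in K$ with $x\parallel y$ and distinguish two cases. If $x,y$ lie in a common block $\K_i$, then $x\lor^{\K_i}y$ is already their supremum in $\K$ by Lemma~\ref{lem: existence join no loop 3} (which only requires the absence of loops of order $3$). If $x$ and $y$ lie in no common block, the key observation is that every upper bound $z\ne 1$ of $\{x,y\}$ is a co-atom: such a $z$ must belong to $K_p\cap K_q$ for blocks with $x\in K_p$, $y\in K_q$ and $p\ne q$, hence $z\in\{0,1,c,c'\}$ where $c$ is the shared atom, and since $z=c$ would force $x=c\ge y$, one is left with $z=c'$. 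As distinct co-atoms are pairwise incomparable, the join exists as soon as there is at most one such $z$. The crucial step is therefore to show that two distinct non-top upper bounds $z_1=c'$ and $z_2=d'$ cannot coexist: writing $x\le^{K_{p_i}}z_i$ and $y\le^{K_{q_i}}z_i$, one checks that the four blocks $K_{p_1},K_{q_1},K_{p_2},K_{q_2}$ are forced to be pairwise distinct --- any coincidence among them (say $p_1=p_2$ or $q_1=q_2$) collapses the configuration into three blocks sharing atoms pairwise, that is, an atomic loop of order $3$ --- and that these four blocks, with consecutively shared elements $z_1,y,z_2,x$, constitute an atomic loop of order $4$. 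Both outcomes contradict the hypothesis, so the join exists; meets then follow by applying ${}'$.

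For the converse I argue contrapositively. If $\K$ contains a loop of order $3$, then by Theorem~\ref{thm: greechie1} it is not sharply paraorthomodular, whence, every paraorthomodular lattice being sharply paraorthomodular, it is not a paraorthomodular lattice. If $\K$ contains a loop of order $4$ with blocks $\K_1,\dots,\K_4$ and pairwise distinct shared atoms $a_1\in K_1\cap K_2$, $a_2\in K_2\cap K_3$, $a_3\in K_3\cap K_4$, $a_4\in K_4\cap K_1$, I would test $a_1\lor a_3$. Using Remark~\ref{rem: kleenehalbo} blockwise one gets $a_1,a_3\le a_2'$ and $a_1,a_3\le a_4'$ with $a_2'\ne a_4'$; should $a_1,a_3$ additionally share a block outside the loop, that block together with $\K_2,\K_3$ (or $\K_1,\K_4$) already forms a loop of order $3$, reducing to the previous case. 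Otherwise $a_1$ and $a_3$ lie in no common block, so by the co-atom analysis above any supremum would have to coincide with both co-atoms $a_2'$ and $a_4'$, which is absurd; hence $a_1\lor a_3$ does not exist and $\K$ is not lattice-ordered.

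I expect the main obstacle to be the bookkeeping in the no-common-block case of the first implication: verifying that two distinct co-atom upper bounds really do assemble into a bona fide atomic loop of order $4$, and in particular ruling out the degenerate coincidences of the four blocks as well as the possibility that non-consecutive (chordal) or triple intersections are nontrivial --- each such anomaly having to be shown instead to produce a loop of order $3$, so that the hypothesis is contradicted in every branch. This is exactly the combinatorial core of Greechie's classical argument, here transplanted to the Kleene setting via Lemmas~\ref{lem: uniontwoblockpoml}, \ref{lem: cover} and \ref{lem: existence join no loop 3}.
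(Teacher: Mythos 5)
Your proposal is correct and takes essentially the same approach as the paper: reduce to the existence of binary joins, split on whether $x,y$ share a block (using Lemma~\ref{lem: existence join no loop 3} in the first case), observe that in the second case every non-top upper bound is a co-atom lying in a $4$-element pairwise intersection, and show that any failure configuration assembles into an atomic loop of order $3$ or $4$. The only real variation is in the order-$4$-loop direction, where you exhibit $a_2'$ and $a_4'$ directly as two distinct, hence incomparable, co-atomic upper bounds of $\{a_1,a_3\}$, while the paper instead derives an order-$3$ loop from the assumed existence of $a_1\vee a_3$; both versions rest on the same combinatorial bookkeeping.
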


\begin{proof}
If $\K$ contains a loop of order $3$ then it is not a lattice by Theorem \ref{thm: greechie1}. Therefore, we can assume without loss of generality that $\K$ does not contain loops of order $3$. Suppose that 
$\K$ contains a loop $(\K_{1},\K_{2},\K_{3},\K_{4})$ of order $4$. Let $K_{i}\cap K_{i+1}=\{0,1,a_{i},a'_{i}\}$ and $K_{4}\cap K_{1}=\{0,1,a_{4},a'_{4}\}$, 
$a_1, a_2, a_3, a_4$ atoms. Let us assume towards a contradiction that $a_{1}\lor a_{3}$ exists. Note that
\begin{enumerate}
\item[($\ast$)] $a_{1}\lor a_{3}\leq a'_{2},a'_{4}$.
\end{enumerate}
Therefore, there are $i,j\in I$ such that $a_{1}\lor a_{3}\leq^{\K_{i}}a'_{2}$ and $a_{1}\lor a_{3}\leq^{\K_{j}} a'_{4}$. If $i\ne j$, then $a_{1}\lor a_{3}$ is either an atom, or a co-atom, or $1$. But this is impossible because of ($\ast$). So we conclude that $i=j$. Now, we have $a_{1}\leq^{\K_{k}}a_{1}\lor a_{3}$. Therefore, reasoning as above, we conclude that $k=i$. But then $(\K_{1},\K_{i},\K_{4})$ is a loop of order $3$, which is impossible.\\
Conversely, suppose that $\K$ does not contain loops of order $3$ or $4$. Assume towards a contradiction that $\{x,y\}\subseteq K$ does not have a least upper bound \emph{A fortiori} this means that $x\parallel y$ (and, of course, that $x,y\notin\{0,1\}$). Let us consider the following cases.
\begin{enumerate}
\item There exists $i\in I$ such that $x,y\in K_{i}$. By hypothesis there is $z\in K$ such that $x\lor^{\K_{i}} y\not\leq z$. Clearly, $z\ne 1$. Moreover, we have also $x\leq^{\K_{j}}z$ and $y\leq^{\K_{k}}z$. If $j=i=k$, then $x\lor^{\K_{i}} y\leq z$ which is impossible. If e.g.\ $i\ne j$ and $k=i$ then $y$ must be an atom. So $x\land y = 0$ and, by Remark \ref{rem: kleenehalbo}, $x\leq y'$. But then, by Theorem \ref{thm: greechie1}, $x\lor y$ exists, a contradiction. So one must have $i\ne j$ and $i\ne k$ and so it is easily seen that $(\K_{i},\K_{j},\K_{k})$ forms a loop of order $3$, against our assumptions. 

\item If $x$ and $y$ are in distinct blocks, then let $x\leq^{\K_{i}}z$ and $y\leq^{\K_{j}}z$ with $z\ne 1$ and consider $u\in K$ such that $x\leq^{\K_{k}}u$,  $y\leq^{\K_{l}}u$ but $z\not\leq u$ (and so $u\ne 1$). By hypothesis, one has $i\ne j$. Therefore $z$ must be a co-atom. Now, if $j\ne l$ and $i\ne k$, since necessarily $k\ne l$, one has that $(\K_{i},\K_{j},\K_{l},\K_{k})$ is a loop of order $4$, against our assumptions. Therefore, one must have $j=l$ or $i=k$. In both cases, it can be seen that one must have $u=z$. Again a contradiction.
\end{enumerate}
\end{proof}

\section{Relatively paraorthomodular posets}\label{sec:relativelyparaorthomodular}
In the sequel we introduce posets closely related to paraorthomodular ones.

A {\em relatively paraorthomodular poset} is an ordered quintuple $\big(P,\leq,({}^x;x\in P),0,1\big)$ such that $(P,\leq,0,1)$ is a bounded poset and for every $x\in P$, $([x,1],\leq,{}^x,x,1)$ is a paraorthomodular poset. Hence, a bounded poset $(P,\leq,0,1)$ is relatively paraorthomodular if in each principal filter $[x,1]$ there exists an antitone involution ${}^x$ such that
\begin{enumerate}
\item[(RP)] $x\leq y\leq z$ and $y^x\wedge z=x$ imply $y=z$.
\end{enumerate}

\begin{example}An example of a poset {\rm(}even a lattice{\rm)} where every proper interval $[x,1]$ can be equipped with an antitone involution, but which is not relatively paraorthomodular, is visualized in Fig.~7.

		\begin{tabular}{p{0.50\textwidth} p{0.100\textwidth} p{0.400\textwidth}} 
It is evident how to introduce the involution in every interval $[x,1]$; for example, in $[a,1]$ we have $a^a:=1$, $(b')^a:=b'$ and $1^a:=a$. However this poset is not paraorthomodular and hence also not relatively paraorthomodular since $a\leq b'$ and $a'\wedge b=0$, but $a\neq b'$. If, however, the involution in $[0,1]$ is defined as in Fig.~2 then this poset is relatively paraorthomodular.& &
\begin{minipage}{0.400\textwidth}
	\vskip1.8cm
\setlength{\unitlength}{6mm}
\begin{picture}(6,8)
\put(3,1){\circle*{.3}}
\put(1,3){\circle*{.3}}
\put(5,3){\circle*{.3}}
\put(1,5){\circle*{.3}}
\put(5,5){\circle*{.3}}
\put(3,7){\circle*{.3}}
\put(3,1){\line(-1,1)2}
\put(3,1){\line(1,1)2}
\put(3,7){\line(-1,-1)2}
\put(3,7){\line(1,-1)2}
\put(1,3){\line(0,1)2}
\put(5,3){\line(0,1)2}
\put(2.85,.3){$0$}
\put(.35,2.85){$a$}
\put(5.4,2.85){$b$}
\put(.35,4.85){$b'$}
\put(5.4,4.85){$a'$}
\put(2.3,7.4){$1=0'$}
\put(2.25,-.75){{\rm Fig.~7}}
\put(-.8,-1.55){non-paraorthomodular lattice}
\end{picture}
\end{minipage}
\end{tabular}

\end{example}

\begin{example}
Examples of relatively paraorthomodular posets which are not lattices are visualized in Fig.~1.
\end{example}

It is a question whether also relatively paraorthomodular lattices and posets can be converted into a certain logic, i.e.\ whether we can introduce a connective implication $\rightarrow$ having some expected properties.

For mub-complete relatively paraorthomodular posets we can define
\begin{enumerate}
\item[(I3)] $x\rightarrow y:=\big(\Min U(x,y)\big)^y$
\end{enumerate}
where $A^y:=\{a^y\mid a\in A\}$. Since $y\leq U(x,y)$ we have $\Min U(x,y)\subseteq[y,1]$ and hence $x\rightarrow y$ is well-defined.

Several interesting properties of the operator $\rightarrow$ defined by (I3) in a relatively paraorthomodular poset are listed in the following result.

\begin{theorem}\label{th2}
Let $\mathbf P=\big(P,\leq,({}^z;z\in P),0,1\big)$ be a relatively paraorthomodular 
mub-complete poset, 
$\rightarrow$ defined by {\rm(I3)} and $x,y\in L$. Then the following hold:
\begin{enumerate}[{\rm(i)}]
\item $y\leq x\rightarrow y$,
\item $x\leq y$ if and only if $x\rightarrow y=1$,
\item $x\rightarrow y=\left\{
\begin{array}{ll}
1           & \text{ if }x\leq y, \\
(x\vee y)^y & \text{ if } x\vee y \text{ exists}, \\
x^y         & \text{ if }y\leq x,
\end{array}
\right.$
\item $(x\rightarrow y)\rightarrow y\approx\Min U(x,y)$,
\item $\big((x\rightarrow y)\rightarrow y\big)\rightarrow y\approx x\rightarrow y$.
\end{enumerate}
\end{theorem}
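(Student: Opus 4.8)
The plan is to isolate one elementary observation and then let every clause of the theorem fall out of it. The observation is that for any $w$ lying in the filter $[y,1]$ one has $U(w,y)=U(w)=[w,1]$, so its unique minimal element is $w$ itself; consequently $w\rightarrow y=\big(\Min U(w,y)\big)^y=\{w^y\}$ is a singleton. Since the definition (I3) forces $\Min U(x,y)\subseteq[y,1]$, this ``engine'' applies repeatedly: every argument we feed into $\rightarrow$ on the left after the first step already lies in $[y,1]$. Two standing facts are used throughout: that ${}^y$ is an antitone \emph{involution} on $[y,1]$, hence a bijection with $y^y=1$ and $1^y=y$; and that mub-completeness guarantees $\Min U(x,y)\neq\emptyset$, since $1\in U(x,y)$ always provides an upper bound of the finite set $\{x,y\}$ beneath which a minimal one must sit.

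For (i), every $a\in\Min U(x,y)$ satisfies $y\leq a$, so $a^y\in[y,1]$ and thus $y\leq a^y$; hence $y\leq x\rightarrow y$. For (ii), if $x\leq y$ then $U(x,y)=[y,1]$ gives $\Min U(x,y)=\{y\}$ and $x\rightarrow y=\{y^y\}=\{1\}$; conversely $x\rightarrow y=\{1\}$ forces $\Min U(x,y)=\{1^y\}=\{y\}$ by injectivity of ${}^y$, so $y$ is itself an upper bound of $x$, i.e.\ $x\leq y$. Clause (iii) is a case split: whenever $x\vee y$ exists, $U(x,y)=[x\vee y,1]$ and $\Min U(x,y)=\{x\vee y\}$, whence $x\rightarrow y=\{(x\vee y)^y\}$; specializing to $x\leq y$ (so $x\vee y=y$, $y^y=1$) and to $y\leq x$ (so $x\vee y=x$) yields the first and third lines.

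The last two clauses are where the engine does the work, and computing them is the only place demanding care with the set-valued extension $A\rightarrow y=\bigcup_{a\in A}(a\rightarrow y)$. For (iv), writing $M=\Min U(x,y)$ we have $x\rightarrow y=M^y\subseteq[y,1]$; applying the engine to each $m^y\in[y,1]$ gives $(m^y)\rightarrow y=\{(m^y)^y\}=\{m\}$, so $(x\rightarrow y)\rightarrow y=\bigcup_{m\in M}\{m\}=M=\Min U(x,y)$. For (v) I would feed (iv) back in: $\big((x\rightarrow y)\rightarrow y\big)\rightarrow y=\big(\Min U(x,y)\big)\rightarrow y$, and since each $m\in\Min U(x,y)$ already lies in $[y,1]$ the engine gives $m\rightarrow y=\{m^y\}$, so the union is $\big(\Min U(x,y)\big)^y=x\rightarrow y$.

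The main obstacle is conceptual rather than technical: one must consistently read $x\rightarrow y$ as a \emph{subset} of $P$ and track that the involution ${}^y$, being a bijection on $[y,1]$, commutes cleanly with taking $\Min$ and with the unions defining the set extension. The two facts $(m^y)^y=m$ and $\Min U(w,y)=\{w\}$ for $w\geq y$ are exactly what make each nested application collapse back to a singleton, so the apparent set-valued complexity never actually grows. Notably, paraorthomodularity (RP) is not needed for any of these five clauses---only the involution property on each $[y,1]$ and mub-completeness are used.
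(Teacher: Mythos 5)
Your proposal is correct and follows essentially the same route as the paper's proof: the ``engine'' you isolate (that $\Min U(w,y)=\{w\}$ and hence $w\rightarrow y=\{w^y\}$ for $w\in[y,1]$) is exactly the step the paper applies implicitly in its computation chains for (iv) and (v), and clauses (i)--(iii) are handled identically. Your closing remark that only the antitone involutions and mub-completeness are used, not (RP) itself, is accurate.
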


\begin{proof}
\begin{enumerate}[(i)]
\item follows directly from (I3).
\item The following are equivalent: $x\leq y$, $\Min U(x,y)=y$, $\big(\Min U(x,y)\big)^y=1$, $x\rightarrow y=1$.
\item $x\rightarrow y=\big(\Min U(x,y)\big)^y=\left\{
\begin{array}{ll}
y^y=1 & \text{ if }x\leq y, \\
(x\vee y)^y & \text{ if }x\vee y\text{ exists},\\
x^y & \text{ if }y\leq x.
\end{array}
\right.$
\item
\begin{align*} 
(x\rightarrow y)\rightarrow y & \approx\bigcup_{z\in x\rightarrow y}(z\rightarrow y)\approx\bigcup_{z\in\big(\Min U(x,y)\big)^y}\big(\Min U(z,y)\big)^y\approx \\
                              & \approx\bigcup_{w\in\Min U(x,y)}\big(\Min U(w^y,y)\big)^y\approx\bigcup_{w\in\Min U(x,y)}w^{yy}\approx\Min U(x,y).
\end{align*}
\item According to (iv) and (iii) we have
\begin{align*}
\big((x\rightarrow y)\rightarrow y\big)\rightarrow y & \approx\Min U(x,y)\rightarrow y\approx\bigcup_{z\in\Min U(x,y)}(z\rightarrow y)\approx\bigcup_{z\in\Min U(x,y)}z^y\approx \\
                                                     & \approx\big(\Min U(x,y)\big)^y\approx x\rightarrow y.
\end{align*}
\end{enumerate}
\end{proof}

Surprisingly, the implication as defined by (I3) can be used for a characterization of paraorthomodularity, see the next theorem.

\begin{theorem}
Let $(P,\leq,0,1)$ be a bounded mub-complete poset, 
for all $x\in P$ let $([x,1],{}^x,x,1)$ be a bounded poset with an antitone involution and let $\rightarrow$ be defined by {\rm(I3)}. Then the following are equivalent:
\begin{enumerate}[{\rm(i)}]
\item $(P,\leq,{}^0,0,1)$ is paraorthomodular.
\item $x\leq y^0$ and $x\rightarrow y=y$ together imply $x=y^0$.
\end{enumerate}
\end{theorem}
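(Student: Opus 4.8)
The plan is to use the global antitone involution ${}^0$ on $[0,1]=P$ to turn the set-valued condition $x\rightarrow y=y$ into a statement about the lower cone $L(x^0,y^0)$, on which the paraorthomodular law (P) bites directly; I read $x\rightarrow y=y$ as the set equality $x\rightarrow y=\{y\}$, following the convention of Theorem~\ref{th2}. First I would unwind this hypothesis: by (I3) it reads $\big(\Min U(x,y)\big)^y=\{y\}$, and since ${}^y$ is an antitone involution of $[y,1]$, hence a bijection with $1^y=y$, it is equivalent to $\Min U(x,y)=\{1\}$. The second ingredient is a transfer lemma for ${}^0$: since $c\in L(x^0,y^0)$ iff $c^0\in U(x,y)$, we get $L(x^0,y^0)=\big(U(x,y)\big)^0$ and, by antitonicity, $\Max L(x^0,y^0)=\big(\Min U(x,y)\big)^0$; symmetrically, $U(x,y^0)=\big(L(x^0,y)\big)^0$, so $\Min U(x,y^0)=\big(\Max L(x^0,y)\big)^0$.

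For (i)$\Rightarrow$(ii), assuming (P), suppose $x\le y^0$ and $x\rightarrow y=y$. The first step gives $\Min U(x,y)=\{1\}$, so the transfer lemma yields $\Max L(x^0,y^0)=\{1^0\}=\{0\}$. For any $c\in L(x^0,y^0)$ the element $c^0\in U(x,y)$ is an upper bound of the finite set $\{x,y\}$, so mub-completeness supplies a minimal upper bound $m\le c^0$, necessarily $m\in\Min U(x,y)=\{1\}$; hence $c^0=1$ and $c=0$. Thus $L(x^0,y^0)=\{0\}$, i.e.\ $x^0\wedge y^0=0$, and with $x\le y^0$ condition (P) delivers $x=y^0$.

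For (ii)$\Rightarrow$(i), let $a\le b$ with $a^0\wedge b=0$, i.e.\ $L(a^0,b)=\{0\}$, and set $x:=a$, $y:=b^0$, so that $x\le y^0=b$. The symmetric transfer lemma gives $\Min U(a,b^0)=\big(\Max L(a^0,b)\big)^0=\{0^0\}=\{1\}$, whence $x\rightarrow y=\{1\}^{b^0}=\{b^0\}=\{y\}$, that is $x\rightarrow y=y$. Now (ii) forces $x=y^0$, i.e.\ $a=b^{00}=b$, which is exactly (P).

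The main obstacle is the bookkeeping between the many section involutions ${}^y$ and the single global involution ${}^0$, together with the observation that the set equation $x\rightarrow y=y$ collapses to $\Min U(x,y)=\{1\}$. The one genuinely structural step is promoting $\Max L(x^0,y^0)=\{0\}$ to the full equality $L(x^0,y^0)=\{0\}$ needed to fire (P): this is precisely where mub-completeness (equivalently mlb-completeness, available since $\mathbf P$ carries the antitone involution ${}^0$) is used. Once the transfer lemma is in hand, the converse direction is a direct computation.
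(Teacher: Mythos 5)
Your proof is correct and follows essentially the same route as the paper's: both reduce the set-valued equation $x\rightarrow y=y$ to $\Min U(x,y)=\{1\}$ and then, via mub-completeness and the global involution ${}^0$, to $x^0\wedge y^0=0$, so that condition (ii) becomes literally (P) for $(P,\leq,{}^0,0,1)$. The paper compresses this into a single bidirectional chain of equivalences ($x\rightarrow y=y$ iff $\Min U(x,y)=1$ iff $U(x,y)=1$ iff $x\vee y=1$ iff $x^0\wedge y^0=0$), whereas you spell out the two directions and the transfer between $U(x,y)$ and $L(x^0,y^0)$ explicitly, but the mathematical content is identical.
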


\begin{proof}
The following are equivalent: $x\rightarrow y=y$, $\big(\Min U(x,y)\big)^y=y$, $\Min U(x,y)=1$, $U(x,y)=1$, $x\vee y=1$, $x^0\wedge y^0=0$. Hence (ii) is equivalent to (ii'):
\begin{enumerate}
\item[(ii')] $x\leq y^0$ and $x^0\wedge y^0=0$ imply $x=y^0$.
\end{enumerate}
But this is nothing else than (P) for $(P,\leq,{}^0,0,1)$.
\end{proof}

Our results can be illuminated by the following example.

\begin{example}
Consider the relatively paraorthomodular poset depicted in {\rm Fig.~1(a)}. In sections define antitone involutions as follows: \\
$[a,1]$: $(a')^a:=b'$, $(b')^a:=a'$, $a^a:=1$, $1^a:=a$, \\
$[b,1]$: $(a')^b:=b'$, $(b')^b:=a'$, $b^b:=1$, $1^b:=b$ \\
and trivially in the two-element intervals $[a',1]$ and $[b',1]$. Then the table for the operator $\rightarrow$ defined by $x\rightarrow y:=\big(\Min U(x,y)\big)^y$ is as follows:
\[
\begin{array}{c|cccccc}
\rightarrow & 0  &     a     &     b     & a' & b' & 1 \\
\hline
     0      & 1  &     1     &     1     & 1  & 1  & 1 \\
     a      & a' &     1     & \{a',b'\} & 1  & 1  & 1 \\
     b      & b' & \{a',b'\} &     1     & 1  & 1  & 1 \\
     a'     & a  &     b'    &     b'    & 1  & b' & 1 \\
     b'     & b  &     a'    &     a'    & a' & 1  & 1 \\
     1      & 0  &     a     &     b     & a' & b' & 1
\end{array}
\]
\end{example}

If we assume a certain compatibility condition, we can prove a stronger result.

\begin{theorem}
Let $(P,\leq,0,1)$ be a bounded mub-complete poset 
and for every $x\in P$ let $^x$ be an antitone involution on $([x,1],\leq)$ such that the following compatibility condition holds:
\begin{enumerate}
\item[{\rm(C)}] $x\leq y\leq z$ implies $z^y=z^x\vee y$.
\end{enumerate}
Moreover, let $\rightarrow$ be defined by {\rm(I3)}. Then the following are equivalent:
\begin{enumerate}[{\rm(i)}]
\item $\mathbf P=\big(P,\leq,(^x;x\in P),0,1\big)$ is relatively paraorthomodular.
\item $x\rightarrow y=1$ implies $x\leq y$.
\end{enumerate}
\end{theorem}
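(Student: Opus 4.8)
The plan is to treat the two implications separately, with essentially all the substance lying in (ii)$\Rightarrow$(i). The direction (i)$\Rightarrow$(ii) is immediate: if $\mathbf P$ is relatively paraorthomodular then, being also mub-complete, Theorem~\ref{th2}(ii) applies and gives $x\le y$ exactly when $x\rightarrow y=1$; in particular $x\rightarrow y=1$ forces $x\le y$. (In fact one checks directly, using only that each $^y$ is an antitone involution on $[y,1]$ with $1^y=y$, that $x\rightarrow y=1$ is already equivalent to $x\le y$ at this level of generality, so (ii) is really the harmless half of the stated equivalence.)

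For (ii)$\Rightarrow$(i), I would fix $x\in P$ and verify the paraorthomodular law (RP) for the principal filter $[x,1]$: assume $x\le y\le z$ with $y^x\wedge z=x$ and aim at $y=z$. First I would invoke the De~Morgan law for the antitone involution $^x$ on $[x,1]$ — recalling that the $^x$-image of a meet is the join of the $^x$-images — applied to the hypothesis $y^x\wedge z=x$, which gives $y\vee z^x=(y^x\wedge z)^x=x^x=1$. Next I would bring in the compatibility condition (C) along the chain $x\le y\le z$, obtaining $z^y=z^x\vee y=1$. Since $y\le z$ forces $\Min U(z,y)=\{z\}$, the definition (I3) now reads $z\rightarrow y=\{z^y\}=\{1\}$, and (ii) delivers $z\le y$; combined with $y\le z$ this yields $y=z$, as required. (Equivalently, one may bypass (ii) and read off $z=1^y=y$ straight from $z^y=1$.)

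The crux — and the only point where the hypotheses genuinely bite — is the use of (C). The assumption $y^x\wedge z=x$ is a statement about the involution $^x$ on $[x,1]$, whereas the conclusion we want concerns the involution $^y$ on $[y,1]$, and a priori these two involutions on different principal filters are entirely uncoupled; condition (C), in the guise $z^y=z^x\vee y$, is precisely the bridge that transports the level-$x$ identity $y\vee z^x=1$ into the level-$y$ identity $z^y=1$. I therefore expect the main things to get right to be this transfer step and a careful verification that the De~Morgan step is legitimate for an antitone involution restricted to $[x,1]$; the role of (ii) is then merely to convert the computed value $z\rightarrow y=1$ back into the order relation $z\le y$ that closes (RP).
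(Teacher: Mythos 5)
Your proof is correct and follows essentially the same route as the paper: the forward direction is delegated to Theorem~\ref{th2}(ii), and the converse verifies (RP) by computing $z\rightarrow y=z^y=z^x\vee y=(y^x\wedge z)^x=x^x=1$ via the De~Morgan property of the antitone involution $^x$ together with (C), then invoking (ii) to get $z\leq y$. Your parenthetical observations (that $x\rightarrow y=1$ is equivalent to $x\leq y$ for any such involution, and that $z^y=1$ already forces $z=y$) are accurate refinements but do not change the argument, which matches the paper's.
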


\begin{proof}
(i) $\Rightarrow$ (ii): \\
This follows from Theorem~\ref{th2}. \\
(ii) $\Rightarrow$ (i): \\
We want to prove that $([x,1],{}^x,x,1)$ is paraorthomodular. If $x\leq y\leq z$ and $y^x\wedge z=x$
\[
z\rightarrow y=\big(\Min U(z,y)\big)^y=z^y=z^x\vee y=(z\wedge y^x)^x=x^x=1
\]
according to (C) and hence $z\leq y$ according to (ii) which together with $y\leq z$ yields $y=z$.
\end{proof}

\section{Relatively paraorthomodular join-semilattices}\label{sec: rel}

More interesting results can be obtained if we assume our paraorthomodular poset to be a join-semilattice. Then definition (I3) of the implication reduces to the following one:
\begin{enumerate}
\item[(I4)] $x\rightarrow y:=(x\vee y)^y$
\end{enumerate}
where $(x\vee y)^y$ denotes the result of applying the involution $^y$ in the interval $[y,1]$ to $x\vee y$. Since $x\vee y\in[y,1]$, $(x\vee y)^y$ is well-defined.

\begin{example}
Consider the lattice $\mathbf L=(L,\leq,{}',0,1)$ depicted in Fig.~8: 

\vspace*{-4mm}

\begin{center}
\setlength{\unitlength}{7mm}
\begin{picture}(11,8)
\put(4,1){\circle*{.3}}
\put(1,3){\circle*{.3}}
\put(3,3){\circle*{.3}}
\put(5,3){\circle*{.3}}
\put(7,3){\circle*{.3}}
\put(4,5){\circle*{.3}}
\put(6,5){\circle*{.3}}
\put(8,5){\circle*{.3}}
\put(10,5){\circle*{.3}}
\put(7,7){\circle*{.3}}
\put(4,1){\line(-3,2)3}
\put(4,1){\line(-1,2)1}
\put(4,1){\line(1,2)1}
\put(4,1){\line(3,2)6}
\put(1,3){\line(3,2)6}
\put(4,5){\line(-1,-2)1}
\put(4,5){\line(1,-2)1}
\put(4,5){\line(3,-2)3}
\put(7,3){\line(-1,2)1}
\put(7,3){\line(1,2)1}
\put(7,7){\line(-1,-2)1}
\put(7,7){\line(1,-2)1}
\put(7,7){\line(3,-2)3}
\put(3.85,.3){$0$}
\put(.35,2.85){$a$}
\put(2.35,2.85){$b$}
\put(5.4,2.85){$c$}
\put(7.4,2.85){$d$}
\put(3.35,4.85){$d'$}
\put(5.35,4.85){$c'$}
\put(8.4,4.85){$b'$}
\put(10.4,4.85){$a'$}
\put(6.3,7.4){$1=0'$}
\put(3.2,-.75){{\rm Fig.~8}}
\put(-.5,-1.55){relatively paraorthomodular lattice}
\end{picture}
\end{center}

\vspace*{8mm}

Evidently, this poset is paraorthomodular. Consider e.g.\ the interval $[a,1]$. Then we define an involution ${}^a$ as follows: $a^a:=1$, $(d')^a:=d'$, $1^a:=a$; similarly for the intervals $[b,1]$, $[c,1]$ and trivially for two-element intervals $[d',1]$, $[c',1]$, $[b',1]$ and $[a',1]$. For $[d,1]$ we define
\[
d^d:=1, (a')^d:=d', (b')^d:=c', (c')^d:=b', (d')^d:=a'\text{ and, of course, }1^d:=d.
\]
One can easily check that every interval of the form $[x,1]$ is a paraorthomodular poset and hence $\mathbf L$ is a relatively paraorthomodular lattice.
\end{example}

\begin{remark}
Assume that $\mathbf L=(L,\vee,\wedge,{}',0,1)$ is an orthomodular lattice and $a,b\in L$ with $a\leq b$. It is well-known {\rm(}cf.\ e.g.\ {\rm\cite K)} that then $\big([a,b],\vee,\wedge,x\mapsto(x'\vee a)\wedge b,a,b\big)$ is an orthomodular lattice, too. Hence $\mathbf L$ is relatively paraorthomodular and satisfies {\rm(C)} since
\[
x\leq y\leq z\text{ implies }z^y=z'\vee y=z'\vee x\vee y=z^x\vee y
\]
and {\rm(I4)} has the form
\[
x\rightarrow y=(x\vee y)^y=\big(y\vee(x\vee y)'\big)\wedge1=y\vee(x'\wedge y')
\]
since $x\vee y\in[y,1]$. We see that this definition of $x\rightarrow y$ coincides with {\rm(I2)}.
\end{remark}

\begin{lemma}
Let $\big(L,\vee,(^x;x\in L),0,1\big)$ be a relatively paraorthomodular join-semilattice and $\rightarrow$ be defined by {\rm(I4)}. Then $x\leq y$ implies $y\rightarrow z\leq x\rightarrow z$.
\end{lemma}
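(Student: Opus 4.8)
The plan is to exploit directly the antitonicity of the interval involution $^z$ on $[z,1]$, so that the desired inequality follows immediately from the monotonicity of the join. First I would observe that since $x\leq y$, joining both sides with $z$ yields $x\vee z\leq y\vee z$; moreover, both $x\vee z$ and $y\vee z$ lie in the principal filter $[z,1]$, which is exactly the domain on which the involution $^z$ is defined. This is the only ``well-definedness'' point that needs to be recorded.

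Next, since $^z$ is by hypothesis an antitone involution on $([z,1],\leq)$, applying it to the inequality $x\vee z\leq y\vee z$ reverses it, giving $(y\vee z)^z\leq(x\vee z)^z$. By definition {\rm(I4)} we have $y\rightarrow z=(y\vee z)^z$ and $x\rightarrow z=(x\vee z)^z$, so this inequality is precisely $y\rightarrow z\leq x\rightarrow z$, which is the claim.

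I do not expect any genuine obstacle here. The two things to verify carefully are, first, that both joins indeed land in the filter $[z,1]$ so that $^z$ may legitimately be applied, and second, that the relevant involution is the one attached to the \emph{second} argument $z$ of $\rightarrow$ (not to $x$ or $y$), so that both sides are measured by the same order-reversing map. This lemma is the join-semilattice counterpart of the earlier antitonicity statement for the operator defined by {\rm(I2)}, but it is notably cleaner: because $L$ is a join-semilattice the supremum $x\vee z$ exists outright, and because $^z$ is a genuine antitone involution on the interval there is no need to pass through $\Max L(\cdots)$ or to weaken $\leq$ to the relation $\leq_1$ as in Theorem~\ref{th1}(ii).
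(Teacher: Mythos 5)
Your proposal is correct and follows exactly the paper's own argument: join with $z$ to get $x\vee z\leq y\vee z$ in $[z,1]$, then apply the antitone involution $^z$ to reverse the inequality and read off the claim via (I4). The extra remarks on well-definedness are sound but not needed beyond what the paper already records.
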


\begin{proof}
$x\leq y$ implies $x\vee z\leq y\vee z$ and hence $y\rightarrow z=(y\vee z)^z\leq(x\vee z)^z=x\rightarrow z$.
\end{proof}

\section{Adjointness}\label{sec: adj}

Having defined the connective implication $\rightarrow$, e.g., as shown in the previous sections, the question arises if there exists a binary connective $\odot$ such that they form a so-called {\em adjoint pair}, i.e.
\[
x\odot y\leq z\text{ if and only if }x\leq y\rightarrow z.
\]
It is worth noticing that in case that $\odot$ and $\rightarrow$ are binary operations, any of the two operations $\odot$ and $\rightarrow$ uniquely determines the other one. Namely, for given $x$ and $y$, the element $x\odot y$ is the smallest element $z$ satisfying $x\leq y\rightarrow z$, and for given $y$ and $z$, the element $y\rightarrow z$ is the greatest element $x$ satisfying $x\odot y\leq z$.

Let us note that we do not require $\odot$ to be associative or commutative. We will show that the requirement that $\odot$ and $\rightarrow$ form an adjoint pair is not realistic for paraorthomodular lattices or posets and hence we consider only the 
following two pairs of weaker conditions (A) and (B), and (A)${}_{21}$ and (B)${}_{12}$ respectively:
\medskip

\begin{minipage}{0.49\textwidth}
\begin{enumerate}
\item[(A)] $x\odot y\leq z$ implies $x\leq y\rightarrow z$,
\item[(B)] $x\leq y\rightarrow z$ implies $x\odot y\leq z$.
\end{enumerate}
\end{minipage}
\begin{minipage}{0.49\textwidth}
\begin{enumerate}
	\item[(A)${}_{21}$] $x\odot y\leq_2 z$ implies $x\leq_1 y\rightarrow z$,
	\item[(B)${}_{12}$] $x\leq_1 y\rightarrow z$ implies $x\odot y\leq_2 z$.
\end{enumerate}
\end{minipage}

When we will work with posets instead of lattices then the expressions $x\odot y$ and $y\rightarrow z$ can be subsets of the poset in question and hence we sometimes use the relations $\leq_1$ and $\leq_2$ instead of the partial order relation $\leq$.

In what follows, we will consider also another $\rightarrow$, 
the so-called Sasaki implication.

To simplify our reasonings, we will use for an orthogonal mlb-complete poset $\mathbf P$ or 
a lattice $\mathbf L$ with an involution ${}'$ the following 
notation:
\begin{align*}
x\rightarrow_{I} y=y\vee \Max L(x',y'), x\rightarrow_{S} y=x'\vee \Max L(x,y), 
 x\odot_{S} y=y\wedge \Min U(x,y'). 
\end{align*}

Note that in the case when $\mathbf L$ is a lattice with an antitone involution, $x\odot_{S} y$ is the so-called Sasaki projection and 
$ x\rightarrow_{S} y$  is the so-called Sasaki implication. 
Moreover, $x\rightarrow_{I} y=y'\rightarrow_{S} x'$. 

In fact we have the following lemma. 

\begin{lemma}\label{lemadj}
	Let $\mathbf L=(L,\vee,\wedge,{}')$ be a lattice with  an antitone 
	involution ${}'$ on $\mathbf L$. Then {\rm(A)} and {\rm(B)}
	for the connectives $\odot_{S}$  and $ \rightarrow_{S} $ are equivalent, respectively.
\end{lemma}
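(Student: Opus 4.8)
The plan is to reduce the whole statement to a single involutive duality between the two Sasaki connectives, after which each of (A), (B) is obtained from the other by one symmetric substitution. First I would record that in a lattice the two operators collapse to $x\odot_S y = y\wedge(x\vee y')$ and $x\rightarrow_S y = x'\vee(x\wedge y)$, since $\Min U(x,y')=\{x\vee y'\}$ and $\Max L(x,y)=\{x\wedge y\}$. I would also note that an antitone involution on a lattice is an order-reversing bijection, hence a dual automorphism, so that the De~Morgan laws $(a\vee b)'=a'\wedge b'$ and $(a\wedge b)'=a'\vee b'$ hold; these are the only structural facts about ${}'$ that the argument uses.

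The crux is the duality identity
\[
(x\odot_S y)' = y\rightarrow_S x',
\]
which I would verify by a direct De~Morgan computation: $(y\wedge(x\vee y'))'=y'\vee(x'\wedge y)=y'\vee(y\wedge x')=y\rightarrow_S x'$. Applying ${}'$ and relabelling, this takes the two equivalent forms $x\odot_S y=(y\rightarrow_S x')'$ and $y\rightarrow_S z=(z'\odot_S y)'$, which are exactly what is needed to exchange the roles of the two connectives while converting $\leq$ into its reverse.

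With the duality in hand, the equivalence of (A) and (B) follows from the single substitution $(x,y,z)\mapsto(z',y,x')$ together with antitonicity of ${}'$ and the law $a''=a$. For (A)$\Rightarrow$(B): assuming $x\leq y\rightarrow_S z=(z'\odot_S y)'$ gives $z'\odot_S y\leq x'$; applying (A) at the triple $(z',y,x')$ yields $z'\leq y\rightarrow_S x'=(x\odot_S y)'$, whence $x\odot_S y\leq z$. The converse (B)$\Rightarrow$(A) is the mirror image of this chain: from $x\odot_S y=(y\rightarrow_S x')'\leq z$ one gets $z'\leq y\rightarrow_S x'$, and applying (B) at $(z',y,x')$ gives $z'\odot_S y=(y\rightarrow_S z)'\leq x'$, i.e.\ $x\leq y\rightarrow_S z$. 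Because the two implications are literally obtained from one another by applying ${}'$ and relabelling, no additional hypothesis on $\mathbf L$ (such as orthomodularity) is needed.

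I do not expect a serious obstacle: the entire argument rests on spotting the involutive duality $(x\odot_S y)'=y\rightarrow_S x'$, after which everything is forced. The one point to handle with care is the bookkeeping of the substitution and the repeated passage between $\leq$ and its reverse via the involution, so that the triple $(z',y,x')$ is fed into (A) and (B) in precisely the right argument slots.
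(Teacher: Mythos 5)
Your proof is correct and follows essentially the same route as the paper: the paper's argument is precisely the substitution $(x,y,z)=(c',b,a')$ combined with the De~Morgan duality between $y\wedge(x\vee y')$ and $y'\vee(x'\wedge y)$, which is exactly your identity $(x\odot_S y)'=y\rightarrow_S x'$. You merely package the duality as an explicit displayed identity before performing the substitution, which is a presentational difference rather than a mathematical one.
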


\begin{proof} Assume that {\rm(A)} holds. Let $a,b,c\in L$ and assume $a\leq b\rightarrow_{S} c$.
	Then $a\leq b'\vee(c\wedge b)$. Put $x=c'$, $y=b$, and $z=a'$. Hence 
	$z'\leq y'\vee(x'\wedge y)$. We obtain 
	$x\odot_{S} y= y\wedge (x\vee y')\leq z$. By {\rm(A)} we have 
	$x\leq y\rightarrow_{S} z=y'\vee(z\wedge y)$, i.e., $c'\leq b'\vee(a'\wedge b)$. 
	We conclude that $a\odot_{S} b=b\wedge (a\vee b')\leq c $ and {\rm(B)} holds. 
	The reverse implication follows analogously. 	
\end{proof}

Moreover, for the Sasaki implication $\rightarrow_S$, we obtain an adjoint pair provided the lattice in question satisfies even weaker conditions, see the following result.


\begin{theorem}\label{omidentity}	Let $\mathbf L=(L,\vee,\wedge,{}')$ be a 
	lattice with  an involution ${}'$. Then the following conditions  are equivalent .
	\begin{enumerate}[{\rm (i)}]
		\item The orthomodular identities {\rm(OI${}_{\vee}$)} and {\rm(OI${}_{\wedge}$)}
				hold in $\mathbf L$. 
		\item $x\odot_S y\leq z$ if and only if $x\leq y\rightarrow_S z$.
	\end{enumerate}	
\end{theorem}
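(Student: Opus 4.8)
The plan is to read condition (ii) as the statement that, for each fixed $y\in L$, the maps $\phi_y,\psi_y\colon L\to L$ given by $\phi_y(x):=x\odot_S y=y\wedge(x\vee y')$ and $\psi_y(z):=y\rightarrow_S z=y'\vee(y\wedge z)$ form an adjoint pair, i.e.\ $\phi_y(x)\leq z$ if and only if $x\leq\psi_y(z)$. Both maps are monotone, being built from $\vee$ and $\wedge$ with fixed parameters. First I would invoke the standard characterization of adjunctions between posets: for monotone $\phi_y,\psi_y$ the biconditional in (ii) holds for all $x,z$ if and only if the \emph{unit} inequality $x\leq\psi_y(\phi_y(x))$ and the \emph{counit} inequality $\phi_y(\psi_y(z))\leq z$ both hold for all $x,z$. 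The forward direction comes from substituting $z=\phi_y(x)$ and $x=\psi_y(z)$; the backward direction from applying the monotone $\psi_y$, resp.\ $\phi_y$, to the hypothesis and chaining with the unit, resp.\ counit.

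Next I would simplify the two composites using absorption, $y\wedge y=y$ and $y'\vee y'=y'$, obtaining $\psi_y(\phi_y(x))=y'\vee\big(y\wedge(x\vee y')\big)$ and $\phi_y(\psi_y(z))=y\wedge\big(y'\vee(y\wedge z)\big)$. Thus the unit becomes $x\leq y'\vee\big(y\wedge(x\vee y')\big)$ and the counit becomes $y\wedge\big(y'\vee(y\wedge z)\big)\leq z$, and the goal reduces to matching these with the two orthomodular identities. For the unit I would argue both ways: assuming (OI${}_{\vee}$) with $a:=y'$, $b:=x$ (so $a'=y$) yields $y'\vee\big((y'\vee x)\wedge y\big)=y'\vee x\geq x$, which is the unit; conversely, since $y'\vee\big(y\wedge(x\vee y')\big)\leq x\vee y'$ always holds and the unit forces the reverse, the composite equals $x\vee y'$, and this equality is exactly (OI${}_{\vee}$) written with $a=y'$, $b=x$. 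Dually, the counit matches (OI${}_{\wedge}$) with $a:=y$, $b:=z$: the identity gives $y\wedge\big((y\wedge z)\vee y'\big)=y\wedge z\leq z$, and conversely the counit together with the always-valid inequality $y\wedge z\leq\phi_y(\psi_y(z))$ saturates the composite to $y\wedge z$, which is (OI${}_{\wedge}$).

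Chaining the three equivalences then gives that (ii) is equivalent to the conjunction of unit and counit, hence to the conjunction of (OI${}_{\vee}$) and (OI${}_{\wedge}$), i.e.\ to (i). The hard part will be the two converse passages: one must notice that the single inequalities furnished by the unit and the counit are automatically tightened to equalities by the trivial reverse inequalities present in any lattice, and that the substitution $a=y'$ legitimately recovers the identities in full generality precisely because ${}'$ is an involution, so $a=y'$ ranges over all of $L$ as $y$ does. I note that one could instead shorten (ii) to a single implication via Lemma~\ref{lemadj} and derive only one identity, but the symmetric unit/counit route treats (OI${}_{\vee}$) and (OI${}_{\wedge}$) uniformly and seems cleanest.
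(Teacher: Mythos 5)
Your proposal is correct and is essentially the paper's own argument in a more structured dress: the paper's (ii)$\Rightarrow$(i) direction is exactly your unit/counit extraction (from $a\odot_S b'\leq a\odot_S b'$ and $a\rightarrow_S b\leq a\rightarrow_S b$) followed by tightening to equalities via the trivial reverse inequalities, and its (i)$\Rightarrow$(ii) direction is your ``apply the monotone map and chain with the unit/counit'' step written out as a single chain of inequalities. Packaging this through the standard poset-adjunction criterion is a tidy reorganization but not a different proof.
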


\begin{proof}
(i) $\Rightarrow$ (ii): \\
$a\odot_S b\leq c$ implies
	\begin{align*}
		a & \leq a\vee b'\stackrel{\mbox{\rm(OI${}_{\vee}$)}}{=}b'\vee\big((a\vee b')\wedge b\big)=b'\vee\big(b\wedge(a\vee b')\wedge b\big)=b'\vee\big((a\odot_S b)\wedge b\big)\leq \\
		& \leq b'\vee(c\wedge b)=b\rightarrow_S c.
	\end{align*}
	Similarly,  $a\leq b\rightarrow_S c$ implies
	\begin{align*}
		a\odot_S b&=b\wedge(a\vee b')\leq b\wedge\big((b\rightarrow_S c)\vee b'\big)=b\wedge\big(b'\vee(c\wedge b)\vee b'\big)\\ 
		&=b\wedge\big((c\wedge b)\vee b'\big)\stackrel{\mbox{\rm(OI${}_{\wedge}$)}}{=}c\wedge b\leq c.
	\end{align*}
(ii) $\Rightarrow$ (i): \\
Let $a,b\in L$. Since  $	a\odot_S b' \leq a\odot_S b'$ we obtain 
	\begin{align*}
		a\leq 	b' \rightarrow (a\odot b')=b\vee \big(b'\wedge [b'\wedge (a\vee b)]\big)%
		=b\vee \big(b'\wedge (a\vee b)\big).
	\end{align*}
	
	Hence $a\vee b\leq b\vee \big(b'\wedge (a\vee b)\big)$. Because the converse inequality is 
	evident we have 
	$$
	a\vee b= b\vee \big(b'\wedge (a\vee b)\big). 
	$$
	
	Similarly, since $a\rightarrow_S b \leq a\rightarrow_S b$ we obtain
	\begin{align*}
		b\geq (a\rightarrow_S b)\odot_S a=a\wedge \big(a'\vee [a'\vee (a\wedge b)]\big)%
		=a\wedge \big(a'\vee (a\wedge b)\big).
	\end{align*}
	Clearly, $a\wedge \big(a'\vee (a\wedge b)\big)\leq a\wedge b$ and the converse 
	inequality is evident. We conclude that 
	$$
	a\wedge \big(a'\vee (a\wedge b)\big)= a\wedge b. 
	$$\end{proof}

For the case of lattices, we can state and prove the following result showing that condition (A) 
for the connectives $\odot_{S}$  and $ \rightarrow_{I} $ already yields orthomodularity.

\begin{theorem}\label{th3}
Let $\mathbf L=(L,\vee,\wedge,{}',0,1)$ be a bounded lattice with an antitone involution 
and assume {\rm(A)} for 
$\odot_{S}$  and $ \rightarrow_{I} $. Then $\mathbf L$ is orthomodular.
\end{theorem}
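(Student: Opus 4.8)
The plan is to read condition \textrm{(A)} off directly in the lattice and then squeeze out first an orthocomplementation and then the orthomodular law by evaluating \textrm{(A)} at two well-chosen witnesses. In a lattice one has $\Min U(x,y')=x\vee y'$ and $\Max L(x',y')=x'\wedge y'$, so $x\odot_{S}y=y\wedge(x\vee y')$ and $y\rightarrow_{I}z=z\vee(y'\wedge z')$; hence \textrm{(A)} becomes the single implication $y\wedge(x\vee y')\leq z\ \Longrightarrow\ x\leq z\vee(y'\wedge z')$, which is the only hypothesis I will use.

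First I would establish that ${}'$ is an orthocomplementation. Specializing \textrm{(A)} to $x=1$ gives $1\odot_{S}y=y\wedge 1=y$, so the premise reduces to $y\leq z$ and the conclusion to $1\leq z\vee(y'\wedge z')$, that is $z\vee(y'\wedge z')=1$. Putting $y=z$ yields $z\vee z'=1$ for every $z$, and applying the antitone involution gives $(z\wedge z')'=z'\vee z=1$, hence $z\wedge z'=0$. Thus $\mathbf L$ is an ortholattice.

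Next I would derive \textrm{(OM)} by feeding the Sasaki projection back into \textrm{(A)} at the least admissible value of $z$. Set $s:=x\odot_{S}y=y\wedge(x\vee y')$; since $x\odot_{S}y\leq s$ trivially, \textrm{(A)} gives $x\leq y\rightarrow_{I}s=s\vee(y'\wedge s')$. By De Morgan $s'=y'\vee(x'\wedge y)$, so $y'\wedge s'=y'$ by absorption and the bound collapses to $x\leq s\vee y'=(y\wedge(x\vee y'))\vee y'$. Since also $y'\leq s\vee y'$ while $s\vee y'\leq x\vee y'$, the two inequalities force the identity $(y\wedge(x\vee y'))\vee y'=x\vee y'$. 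Finally I reparametrize: put $p:=y'$ (so $p'=y$), and for any $q\geq p$ choose $x:=q$, so that $x\vee y'=q\vee p=q$; the identity becomes $p\vee(q\wedge p')=q$, which is exactly \textrm{(OM)} in the form $p\leq q\Rightarrow p\vee(q\wedge p')=q$. As $y$, hence $p=y'$, ranges over all of $L$, this holds for every $p$ and every $q\geq p$, so $\mathbf L$ is orthomodular.

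I do not expect a genuine obstacle: the whole argument rests on the two substitutions $x=1$ and $z=x\odot_{S}y$, after which everything is routine manipulation via De Morgan and absorption. The one point requiring a little care is the last reparametrization --- recognizing that after setting $p=y'$ and $q=x\vee y'$ the derived equality is precisely the orthomodular law, and verifying that $q$ sweeps out all elements above $p$ as $x$ varies (which it does, since $x=q$ gives $x\vee y'=q$ whenever $q\geq p=y'$).
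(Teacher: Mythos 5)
Your proof is correct and follows essentially the same route as the paper: both hinge on feeding $z=x\odot_S y$ back into (A) and collapsing $y'\wedge s'$ to $y'$ by De Morgan and absorption, yielding the identity $\bigl(y\wedge(x\vee y')\bigr)\vee y'=x\vee y'$, which is (OI${}_{\vee}$) in disguise. Your extra preliminary step establishing $z\vee z'=1$ and $z\wedge z'=0$ is a harmless (and arguably welcome) addition, since it makes explicit that ${}'$ is a complementation, which the paper leaves to follow from the derived identity.
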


\begin{proof}
For $a,b\in L$ every of the following statements implies the next one:
\begin{align*}
a\odot_S b' & \leq a\odot_S b', \\
       a & \leq b'\rightarrow_I(a\odot_S b'), \\
       a & \leq\big(b'\wedge(a\vee b)\big)\vee\Big(b\wedge\big(b\vee(a'\wedge b')\big)\Big), \\
       a & \leq\big(b'\wedge(a\vee b)\big)\vee b, \\
a\vee b & \leq\big(b'\wedge(a\vee b)\big)\vee b.
\end{align*}
The converse inequality is evident. Thus $a\vee b  =\big(b'\wedge(a\vee b)\big)\vee b$.
\end{proof}

Our next goal is to obtain a result analogous to Theorem~\ref{omidentity} but formulated for posets. Here we work with subsets of a given poset $P$ instead of its elements, thus we should use the equality sign ``$\approx_2$'' instead of ``$=$''.

\begin{theorem}\label{OMUI}
Let $\mathbf P=(P,\leq,{}',0,1)$ be an orthogonal mub-complete poset. Then the following conditions are equivalent:
\begin{enumerate}[{\rm(i)}]
\item $\mathbf P$ is an orthomodular poset.
\item $\mathbf P$ satisfies the identity 
\begin{enumerate}
\item[{\rm(OM$_U$)}] $x\vee\big(\Min U(x,y)\wedge x'\big)\approx\Min U(x,y)$.
\end{enumerate}
\item $\mathbf P$ satisfies the condition
\begin{enumerate}
\item[{\rm(OM$_{UE}$)}] $x\vee\big(\Min U(x,y)\wedge x'\big)\approx_2\Min U(x,y)$.
\end{enumerate}
\end{enumerate}	
\end{theorem}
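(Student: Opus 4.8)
The plan is to prove the cycle (i) $\Rightarrow$ (ii) $\Rightarrow$ (iii) $\Rightarrow$ (i), the heart of which is the observation that {\rm(OM$_{UE}$)} is in fact equivalent to the \emph{pointwise} statement $(\star)$: for all $x,y\in P$ and every $m\in\Min U(x,y)$ one has $x\vee(m\wedge x')=m$. Before doing anything I would record that the displayed expressions are well defined in \emph{any} orthogonal poset with antitone involution: if $m\in\Min U(x,y)$ then $x\le m$, whence $m'\le x'$ and $x\perp m'$, so orthogonality yields that $x\vee m'$ exists; since $(x\vee m')'=x'\wedge m$ (the complement of a join is the meet of the complements whenever the join exists), the meet $m\wedge x'$ exists, and as $m\wedge x'\le x'$ we have $(m\wedge x')\perp x$, so $x\vee(m\wedge x')$ exists too. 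Mub-completeness guarantees $\Min U(x,y)\neq\emptyset$, so the statements are not vacuous.

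For (i) $\Rightarrow$ (ii), fix $x,y$ and $m\in\Min U(x,y)$. Since $x\le m$, the orthomodular law (OM) gives $x\vee(m\wedge x')=m$; ranging over all $m$ shows that the set $x\vee(\Min U(x,y)\wedge x')$ coincides elementwise with $\Min U(x,y)$, which is exactly {\rm(OM$_U$)}. The implication (ii) $\Rightarrow$ (iii) is immediate, since literal equality $\approx$ of two subsets entails $\approx_2$.

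The substantive step is (iii) $\Rightarrow$ (i). Here I would first note that, for every $m\in\Min U(x,y)$, both $x\le m$ and $m\wedge x'\le m$ hold, so $x\vee(m\wedge x')\le m$; consequently the relation $x\vee(\Min U(x,y)\wedge x')\le_2\Min U(x,y)$ is automatic. Thus the content of {\rm(OM$_{UE}$)} is the reverse relation $\Min U(x,y)\le_2 x\vee(\Min U(x,y)\wedge x')$: for each $m$ there must be some $m''\in\Min U(x,y)$ with $m''\le x\vee(m\wedge x')\le m$, and minimality of $m$ in $U(x,y)$ forces $m''=m$ and hence $x\vee(m\wedge x')=m$. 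This establishes $(\star)$. To finish, I would specialise $(\star)$ to a comparable pair $a\le b$: then $U(a,b)=U(b)$ has least element $b$, so $\Min U(a,b)=\{b\}$ and $(\star)$ reads $a\vee(b\wedge a')=b$, i.e.\ (OM). Taking in particular $b=1$ (so $1\wedge a'=a'$) yields $a\vee a'=1$, whence $a\wedge a'=(a\vee a')'=0$ by De~Morgan; thus $'$ is a complementation, and together with orthogonality and (OM) this makes $\mathbf P$ an orthomodular poset. The main obstacle is the bookkeeping of existence of all the meets and joins (dispatched once in the first paragraph) together with the minimality argument that upgrades the weak relation $\approx_2$ into the genuine equalities $(\star)$; once $(\star)$ is secured, the two specialisations are routine.
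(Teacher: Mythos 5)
Your proof is correct and follows essentially the same route as the paper's: the cycle (i) $\Rightarrow$ (ii) $\Rightarrow$ (iii) $\Rightarrow$ (i), with the last implication hinging on specialising to a comparable pair $x\leq y$, where $\Min U(x,y)$ is the singleton $\{y\}$ and the relation $\approx_2$ collapses to the equality $x\vee(y\wedge x')=y$. Your additional care --- deriving the pointwise statement $(\star)$ for arbitrary $x,y$ via the minimality argument, checking existence of all the relevant meets and joins, and verifying explicitly that $'$ is a complementation via $x\vee x'=1$ and De Morgan --- only makes explicit what the paper's shorter proof leaves implicit.
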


\begin{proof}
(i) $\Rightarrow$ (ii): \\
This follows immediately from $x\vee(z\wedge x')=z$ for all $z\in\Min U(x,y)$.\\
(ii) $\Rightarrow$ (iii): \\
It is transparent. \\
(iii) $\Rightarrow$ (i): \\
Assume that $x\leq y$. Then $\Min U(x,y)=\{y\}$ 
	and $x\vee\big(\Min U(x,y)\wedge x'\big)=\{x\vee(y\wedge x')\}$. Hence 
	$x\vee(y\wedge x')\leq y\leq x\vee(y\wedge x')$. We conclude that 
	$y= x\vee(y\wedge x')$.
\end{proof} 	

Now we are ready to formulate our result for posets. Let us note that the operation $\odot$ defined for lattices is now replaced by a binary operator.

\begin{lemma}\label{AisB}
	Let $\mathbf P=(P,\leq,{}',0,1)$ be an orthogonal mlb-complete poset. 
	 Then {\rm(A)} and {\rm(B)}, and  {\rm(A)}${}_{21}$ 
	and  {\rm(B)}${}_{12}$  
	for the connectives $\odot_{S}$  and $ \rightarrow_{S} $ are equivalent, respectively.
\end{lemma}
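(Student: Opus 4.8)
The plan is to isolate a single order-reversing duality between $\odot_S$ and $\rightarrow_S$ and then reuse, essentially verbatim, the substitution trick from the proof of Lemma~\ref{lemadj}. The crucial observation is that, as \emph{sets},
\[
(x\odot_S y)'=y\rightarrow_S x'\qquad\text{for all }x,y\in P,
\]
where $A':=\{a'\mid a\in A\}$. First I would check that both operators are well-defined and nonempty-valued: for $u\in\Min U(x,y')$ one has $y'\leq u$, hence $y'\perp u'$, so $y\wedge u=(y'\vee u')'$ exists by orthogonality, and dually $x'\vee m$ exists for $m\in\Max L(x,y)$; moreover $\Min U(x,y')$ and $\Max L(x,y)$ are nonempty by mlb-completeness (recall that for a bounded poset with antitone involution mlb-completeness coincides with mub-completeness, since $0$ and $1$ are lower and upper bounds). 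The involution restricts to an order-reversing bijection $\Min U(x,y')\to\Max L(x',y)$, and applying De~Morgan to each $y\wedge u$ yields the displayed identity.

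From this identity I would extract the two equivalences that power the whole argument:
\[
x\odot_S y\leq z\iff z'\leq y\rightarrow_S x',\qquad x\odot_S y\leq_2 z\iff z'\leq_1 y\rightarrow_S x'.
\]
The first holds because ``$w\leq z$ for every $w\in x\odot_S y$'' is, after applying $'$, the same as ``$z'\leq w'$ for every $w'\in(x\odot_S y)'=y\rightarrow_S x'$''. The second is the analogous statement in which the universal quantifier is replaced by an existential one on the matching side: the $\exists$ defining $\leq_2$ on the left corresponds, under $w\mapsto w'$, exactly to the $\exists$ defining $\leq_1$ on the right.

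Granting these, the implication (A)$\Rightarrow$(B) mirrors Lemma~\ref{lemadj}. Given $a\leq b\rightarrow_S c$, the first equivalence (with $x=c'$, $y=b$, $z=a'$) rewrites this as $c'\odot_S b\leq a'$; applying (A) to the triple $(c',b,a')$ gives $c'\leq b\rightarrow_S a'$, which the same equivalence (now with $x=a$, $y=b$, $z=c$) turns back into $a\odot_S b\leq c$, i.e.\ (B). Replacing $\leq$ by $\leq_2$ on the $\odot_S$-side and $\leq$ by $\leq_1$ on the $\rightarrow_S$-side throughout, and invoking the second equivalence, the identical chain yields (A)${}_{21}\Rightarrow$(B)${}_{12}$. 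The converse implications follow by the symmetric substitution, so each argument is fully reversible.

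The routine-looking but genuinely delicate step is the set-level duality of the first paragraph: unlike the lattice case of Lemma~\ref{lemadj}, here $\odot_S$ and $\rightarrow_S$ return \emph{subsets}, so one must verify that $'$ maps $\Min U(x,y')$ \emph{onto} $\Max L(x',y)$ (not merely into it) and that the two existence flavours $\leq_1,\leq_2$ are interchanged correctly under $'$. This is precisely where orthogonality (to guarantee the relevant joins and meets exist) and mlb-completeness (to guarantee the extremal sets are nonempty) are both needed. Once the duality and its ``$\leq$'' and ``$\leq_1/\leq_2$'' incarnations are in hand, the transfer between (A) and (B), and between (A)${}_{21}$ and (B)${}_{12}$, is purely formal.
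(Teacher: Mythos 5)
Your proposal is correct and follows essentially the same route as the paper: the paper's proof is exactly the substitution $x=c'$, $y=b$, $z=a'$ combined with the (implicit) De~Morgan duality between $y\wedge\Min U(x,y')$ and $y'\vee\Max L(x',y)$ under the antitone involution, which you simply make explicit as the set identity $(x\odot_S y)'=y\rightarrow_S x'$. Your added care about the bijection $\Min U(x,y')\to\Max L(x',y)$ and the interchange of $\leq_1$ and $\leq_2$ is a sound elaboration of the step the paper leaves to the reader.
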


\begin{proof}  We only show that {\rm(A)} and {\rm(B)} are equivalent. The remaining equivalence 
	follows the same proof steps.

Assume {\rm(A)}. Let $a,b,c\in P$. 
	Suppose that $a\leq b\rightarrow_S c$. Then 
	$a\leq b'\vee\Max L(c,b)$. Put $x=c'$, $y=b$, and $z=a'$. Hence 
	$z'\leq y'\vee\Max L(x',y)$. We obtain 
	$x\odot_S y= y\wedge\Min U(x,y')\leq z$. By {\rm(A)} we have 
	$x\leq y\rightarrow_S z=y'\vee\Max L(z,y)$, i.e., $c'\leq b'\vee \Max L(a',b)$. 
	We conclude that $a\odot_S b=b\wedge\Min U(a,b')\leq c $ and {\rm(B)} holds. 
	
	Following the same reasoning we obtain the converse, i.e., {\rm(B)} 
	implies {\rm(A)}.
\end{proof}

As for lattices in Theorem \ref{omidentity}, we obtain, for orthogonal mlb-complete posets the following theorem:

\begin{theorem}\label{SASOM}
Let $\mathbf P=(P,\leq,{}',0,1)$ be an orthogonal mlb-complete poset. Then the following are equivalent:
	\begin{enumerate}[{\rm(i)}]
		\item $\mathbf P$ is orthomodular.
			\item $x\odot_S y\leq_2 z$ if and only if $x\leq_1 y\rightarrow_S z$. 
	\end{enumerate}
\end{theorem}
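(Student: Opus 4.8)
The plan is to reduce the biconditional in (ii) to a single implication and then to match it against the characterization of orthomodularity furnished by Theorem~\ref{OMUI}. Observe first that condition (ii) is exactly the conjunction of (A)${}_{21}$ and (B)${}_{12}$ for the connectives $\odot_S$ and $\rightarrow_S$: its forward implication is (A)${}_{21}$ and its backward implication is (B)${}_{12}$. By Lemma~\ref{AisB} these two conditions are equivalent, so (ii) holds if and only if (A)${}_{21}$ holds, and the whole theorem amounts to proving that $\mathbf P$ is orthomodular if and only if (A)${}_{21}$ is satisfied. Throughout I would keep in mind that, since $\mathbf P$ is a bounded poset with antitone involution, mlb-completeness and mub-completeness coincide, so Theorem~\ref{OMUI} is available; and I would record the elementary existence facts that, for $m\in\Min U(x,y')$ the meet $y\wedge m$ exists (because $y'\leq m$) and that, for $n\in\Max L(y,z)$ the join $y'\vee n$ exists (because $n\leq y$), so that $x\odot_S y$ and $y\rightarrow_S z$ are genuinely the sets of elements one writes down.

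For the implication from orthomodularity to (A)${}_{21}$, assume $\mathbf P$ orthomodular and $x\odot_S y\leq_2 z$. Unwinding $\leq_2$, there is $m_0\in\Min U(x,y')$ with $y\wedge m_0\leq z$. Applying Theorem~\ref{OMUI}(ii), i.e.\ (OM$_U$), with the two variables instantiated to $y'$ and $x$, the set $\{y'\vee(m\wedge y)\mid m\in\Min U(x,y')\}$ equals $\Min U(x,y')$; in particular $y'\vee(y\wedge m_0)$ lies in $\Min U(x,y')$ and is therefore an upper bound of $x$, so $x\leq y'\vee(y\wedge m_0)$. Since $y\wedge m_0\in L(y,z)$, mlb-completeness yields $n\in\Max L(y,z)$ with $y\wedge m_0\leq n$, and monotonicity of the (existing) joins gives $x\leq y'\vee(y\wedge m_0)\leq y'\vee n\in y\rightarrow_S z$. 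This is precisely $x\leq_1 y\rightarrow_S z$, so (A)${}_{21}$ holds; then (B)${}_{12}$ follows from Lemma~\ref{AisB}, and hence (ii) holds.

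For the converse, assuming (A)${}_{21}$ I would verify condition (OM$_{UE}$) of Theorem~\ref{OMUI} and invoke that theorem. Fix $a,b\in P$ and $m\in\Min U(a,b)$, and apply (A)${}_{21}$ with $x:=a$, $y:=b'$ and $z:=m\wedge b'$: since $m\wedge b'$ belongs to $a\odot_S b'=b'\wedge\Min U(a,b)$, we have $a\odot_S b'\leq_2 z$, whence $a\leq_1 b'\rightarrow_S z$. A direct computation, using $m\wedge b'\leq b'$ so that $\Max L(b',m\wedge b')=\{m\wedge b'\}$, gives $b'\rightarrow_S z=\{b\vee(m\wedge b')\}$, and thus $a\leq b\vee(m\wedge b')$. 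Consequently each $b\vee(m\wedge b')$ lies in $U(a,b)$ and satisfies $b\vee(m\wedge b')\leq m$, which yields both halves of $b\vee\big(\Min U(a,b)\wedge b'\big)\approx_2\Min U(a,b)$: the $\leq_2$-direction from $b\vee(m\wedge b')\leq m$, and the reverse $\leq_2$-direction by choosing, via mub-completeness, a minimal upper bound of $\{a,b\}$ below $b\vee(m\wedge b')$. This is (OM$_{UE}$), so $\mathbf P$ is orthomodular by Theorem~\ref{OMUI}.

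The main obstacle I anticipate is bookkeeping rather than conceptual: one must consistently transport the element-level Sasaki computation of Theorem~\ref{omidentity} into statements about the \emph{sets} $x\odot_S y$ and $y\rightarrow_S z$, using $\leq_1$ and $\leq_2$ exactly where the lattice argument uses $\leq$, and checking at each step that the joins and meets that appear really do exist by orthogonality. The one genuinely non-formal point is the reverse $\leq_2$-inclusion in (OM$_{UE}$), where mub-completeness is indispensable to produce a minimal upper bound of $\{a,b\}$ beneath $b\vee(m\wedge b')$.
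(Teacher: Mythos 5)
Your proof is correct and follows essentially the same strategy as the paper: both arguments use Lemma~\ref{AisB} to reduce the biconditional in (ii) to a single adjointness half, and both rest on the orthomodular law for elements of $\Min U(x,y')$ above $y'$ (resp.\ $\Max L(y,z)$ below $y$). The only cosmetic differences are that the paper establishes (B)${}_{12}$ directly and deduces (A)${}_{21}$ from the lemma (you do the reverse), and that for (ii)$\Rightarrow$(i) the paper verifies (OM) in two lines for $a\leq b$ via $b\odot_S a'\leq_2 b$, whereas you take the slightly longer detour of checking (OM$_{UE}$) for arbitrary $a,b$ and invoking Theorem~\ref{OMUI}.
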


\begin{proof}
(i) $\Rightarrow$ (ii): \\
Let $a,b,c\in P$. 
	Suppose that $a\leq_1 b\rightarrow_S c$. Then 
	$a\leq_1  b'\vee\Max L(c,b)$. There is $z\in \Max L(c,b)$ such that 
	$a\leq  b'\vee z\in U(a,b')$. 
	We compute:
	$$
	a\odot_S b=b\wedge\Min U(a,b')\leq_2 b\wedge (b'\vee z)=z\leq c.
	$$
	We conclude that (B)$_{12}$ and by Lemma \ref{AisB} also  (A)$_{21}$ hold 
	 for $\odot_S$ and $\rightarrow_S$. Hence we conclude (ii). \\
(ii) $\Rightarrow$ (i): \\
Assume that $a\leq b$, $a,b\in P$. Let us compute:
	$$
	b\odot_Sa'=a'\wedge \Min U(a,b)= \{a'\wedge b\}\leq_{2} b.
	$$
	Hence by (A)$_{21}$ we obtain $b\leq_1 a'\rightarrow_S b= \{a\vee (a'\wedge b)\}\leq b$, i.e., 
	$b= a\vee (a'\wedge b)$.
\end{proof}

The next result is analogous to Theorem \ref{th3} but formulated for posets.

\begin{theorem}\label{posth3}
Let $\mathbf P=(P,\leq,{}',0,1)$ be an orthogonal poset	and assume {\rm(A)}$_{21}$  for $\odot_{S}$ and $\rightarrow_{I}$. Then $\mathbf P$ is orthomodular.
\end{theorem}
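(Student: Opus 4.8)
The plan is to mirror the lattice argument of Theorem~\ref{th3}: feed a trivially true inequality into the hypothesis {\rm(A)}$_{21}$ and simplify the resulting Sasaki projection and $\rightarrow_I$-implication. The only real novelty over the lattice case is bookkeeping, since $\odot_S$ and $\rightarrow_I$ are now set-valued, the relations are $\leq_1$ and $\leq_2$ rather than $\leq$, and no completeness hypothesis is available, so every meet and join written down must be shown to exist. I would therefore first record the one mechanism producing meets: whenever $u\perp v$, orthogonality provides the join $u\vee v$, and the De~Morgan law $(u\vee v)'=u'\wedge v'$ for antitone involutions then guarantees the meet $u'\wedge v'$ as well.

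To verify {\rm(OM)}, fix $x,y\in P$ with $x\le y$; the goal is $x\vee(y\wedge x')=y$. The crucial choice is to apply the Sasaki projection to the pair $(y,x')$: since $x\le y$ forces $U(y,x)=U(y)$, we get $\Min U(y,x)=\{y\}$, a singleton. Hence $y\odot_S x'=x'\wedge\Min U\big(y,(x')'\big)=x'\wedge\Min U(y,x)=x'\wedge\{y\}=\{x'\wedge y\}$, the meet $x'\wedge y$ existing by the mechanism above (from $x\perp y'$, which holds because $x\le y$). This singleton trivially satisfies $y\odot_S x'\leq_2(x'\wedge y)$, i.e.\ exactly the premise of {\rm(A)}$_{21}$ with $(x,y,z):=(y,x',x'\wedge y)$.

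Applying {\rm(A)}$_{21}$ yields $y\leq_1 x'\rightarrow_I(x'\wedge y)$, and here the $\Max L$ in $\rightarrow_I$ collapses: since $(x'\wedge y)'=x\vee y'$ dominates $x$, one has $\Max L\big(x,(x'\wedge y)'\big)=\Max L(x,x\vee y')=\{x\}$, so $x'\rightarrow_I(x'\wedge y)=(x'\wedge y)\vee\{x\}=\{x\vee(x'\wedge y)\}$, the join existing because $x'\wedge y\le x'$, i.e.\ $x\perp(x'\wedge y)$. Unwinding $\leq_1$ gives $y\le x\vee(x'\wedge y)$, while $x\vee(x'\wedge y)\le y$ is immediate from $x\le y$ and $x'\wedge y\le y$. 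Thus $y=x\vee(y\wedge x')$, which is {\rm(OM)}; as $\mathbf P$ is orthogonal and now satisfies {\rm(OM)} (whence $x\vee x'=1$ and dually $x\wedge x'=0$, so ${}'$ is a complementation), $\mathbf P$ is orthomodular.

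I expect the main obstacle to be precisely this existence-and-collapse bookkeeping: getting the asymmetric roles of $\leq_1$ and $\leq_2$ in {\rm(A)}$_{21}$ right, and arranging the instantiation (via $x\le y$) so that both $\Min U(y,x)$ and $\Max L(x,x\vee y')$ reduce to singletons. Without that collapse the set-valued operators would not simplify, and since $\mathbf P$ is assumed only orthogonal there is no mlb- or mub-completeness to fall back on.
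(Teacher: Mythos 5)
Your proposal is correct and follows essentially the same route as the paper's own proof: instantiate (A)$_{21}$ with the trivially true premise $y\odot_S x'\leq_2 x'\wedge y$ for $x\leq y$, collapse $\Min U(y,x)$ and $\Max L(x,x\vee y')$ to singletons, and read off $y\leq x\vee(x'\wedge y)\leq y$. Your extra bookkeeping on the existence of the relevant joins and meets (via orthogonality and De Morgan) and on ${}'$ being a complementation is sound and only makes explicit what the paper leaves implicit.
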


\begin{proof}
Let $a,b\in L$ and $a\leq b$. Then $b\odot_S a'=a'\wedge(b\vee a)=a'\wedge b$ and
 $\{b\odot_S a'\}\leq_2 \{b\odot_S a'\}$. Therefore 
	$b\leq_{1} a'\rightarrow_I (a'\wedge b)= (a'\wedge b)\vee \Max L(a, a\vee b')=%
	\{a\vee (a'\wedge b)\}\leq b$. We conclude that $a\vee (a'\wedge b)= b$.
\end{proof}

\begin{corollary}
\begin{enumerate}[{\rm(i)}]
\item If\/ $\mathbf L=(L,\vee,\wedge,{}',0,1)$ is a paraorthomodular lattice  
and if\/ $\odot_S$ and $\rightarrow_I$ satisfy {\rm(A)} then $\mathbf L$ is orthomodular.
\item If\/ $\mathbf P=(P,\leq,{}',0,1)$ is a sharply paraorthomodular poset and if\/ $\odot_S$ and $\rightarrow_I$ satisfy {\rm(A)}$_{21}$  then $\mathbf P$ is orthomodular.
\end{enumerate}
\end{corollary}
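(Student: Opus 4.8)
The plan is to read off both assertions directly from Theorems~\ref{th3} and~\ref{posth3}, which already show that the weak, one-sided adjointness of $\odot_S$ and $\rightarrow_I$ forces orthomodularity under hypotheses \emph{weaker} than the ones imposed here; the only work is to verify that a paraorthomodular lattice (resp.\ a sharply paraorthomodular poset) satisfies those hypotheses.

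For part (i), I would first observe that a paraorthomodular lattice $\mathbf L=(L,\vee,\wedge,{}',0,1)$ is in particular a bounded lattice with an antitone involution, so the structural assumption of Theorem~\ref{th3} is met. Next I would note that in the lattice setting the set-valued operators collapse to single values, namely $\Max L(x',y')=\{x'\wedge y'\}$ and $\Min U(x,y')=\{x\vee y'\}$, so that $x\rightarrow_I y=y\vee(x'\wedge y')$ and $x\odot_S y=y\wedge(x\vee y')$ are the genuine lattice operations appearing in Theorem~\ref{th3}. Consequently the assumption that $\odot_S$ and $\rightarrow_I$ satisfy {\rm(A)} is literally the hypothesis of Theorem~\ref{th3}, and that theorem yields orthomodularity of $\mathbf L$.

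For part (ii), I would recall that, by definition, a sharply paraorthomodular poset is an orthogonal paraorthomodular poset, hence in particular an orthogonal poset $\mathbf P=(P,\leq,{}',0,1)$. Theorem~\ref{posth3} assumes exactly this together with the condition that $\odot_S$ and $\rightarrow_I$ satisfy {\rm(A)}$_{21}$; both are available here, so Theorem~\ref{posth3} applies verbatim and delivers orthomodularity of $\mathbf P$.

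The reductions being immediate, I do not expect any genuine obstacle; the only point deserving a line of comment is that the (sharply) paraorthomodular hypothesis is not actually consumed in the derivation, since Theorems~\ref{th3} and~\ref{posth3} already produce orthomodularity from {\rm(A)} (resp.\ {\rm(A)}$_{21}$) alone. I would therefore present the corollary as a conceptual specialization: imposing even the weaker half-adjunction {\rm(A)}/{\rm(A)}$_{21}$ between the Sasaki projection $\odot_S$ and the implication $\rightarrow_I$ already collapses the unsharp (paraorthomodular) structure onto its sharp, orthomodular counterpart.
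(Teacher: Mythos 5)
Your proposal is correct and matches the paper's (implicit) argument exactly: the corollary is stated without proof precisely because it is the immediate specialization of Theorems~\ref{th3} and~\ref{posth3} that you describe, with a paraorthomodular lattice being in particular a bounded lattice with antitone involution and a sharply paraorthomodular poset being in particular an orthogonal poset. Your side remark that the (sharply) paraorthomodular hypothesis is never actually consumed is accurate and worth keeping.
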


\begin{proposition}\label{adji}
Let $\mathbf P=(P,\leq,{}',0,1)$ be an orthogonal mlb-complete poset
	and assume  that there exists a binary operator $\odot_I$ on $P$ such that 
	$x\odot_I y\leq z$ if and only if 
	$x\leq y\rightarrow_I z$ for all $x, y, z\in P$.
	Then 
	\begin{enumerate}[{\rm(i)}]
		\item $x\odot_I x'=\{0\}$ for all $x \in P$, 
		\item $\mathbf P$ is an orthomodular poset, 
		\item $x\odot_I y\leq_1  \Max L(x,y)\leq x, y$ for all $x, y\in P$,
		\item $\mathbf P$ is weakly Boolean.
	\end{enumerate}
\end{proposition}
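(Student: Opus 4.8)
The plan is to extract from the adjunction hypothesis a handful of evaluations of $\rightarrow_I$ and feed them back through the biconditional. The decisive point is that the single instance $y=0$ already forces $'$ to be a complementation; once $\mathbf P$ is complemented, the remaining parts follow from Theorem~\ref{th1}, mlb-completeness, and the fact that under a complementation (P) and (OM) coincide. So I would isolate complementation as the first (and hardest) step. Using (I1),
\[
0\rightarrow_I 0=0\vee\Max L(1)=\{1\},\qquad 0\rightarrow_I t=t\vee\Max L(1,t')=\{t\vee t'\}.
\]
From $1\le 0\rightarrow_I 0$ and the ``$\Leftarrow$'' half of the biconditional we get $1\odot_I 0\le 0$, hence $1\odot_I 0\le t$ for every $t$; the ``$\Rightarrow$'' half then yields $1\le 0\rightarrow_I t=t\vee t'$, i.e.\ $t\vee t'=1$. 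Applying the antitone involution gives $t\wedge t'=(t\vee t')'=0$, so $'$ is a complementation and $\mathbf P$ is sharp.

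For \textbf{(i)}, since $x'\rightarrow_I 0=0\vee\Max L(x,1)=\{x\}$ we have $x\le x'\rightarrow_I 0$, and the ``$\Leftarrow$'' half gives $x\odot_I x'\le 0$, i.e.\ $x\odot_I x'\subseteq\{0\}$. Reading $\odot_I$ as the adjoint operator, its value is the set of minimal solutions of $x\le x'\rightarrow_I(-)$; as $0$ is a solution here and is the least element of $P$, it is the unique minimal solution, whence $x\odot_I x'=\{0\}$. (This nonemptiness/least-solution convention is the only delicate point in~(i).)

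For \textbf{(ii)} I would verify condition (P) and then invoke the equivalence of (P) and (OM) for complementations recalled in Section~\ref{sec: base}. Assume $x\le y$ and $x'\wedge y=0$, i.e.\ $L(x',y)=\{0\}$. By Theorem~\ref{th1}(i) one has $y\le x'\rightarrow_I y$, and by Theorem~\ref{th1}(iii) $x'\rightarrow_I x'=x'\vee x=1$; the ``$\Leftarrow$'' half therefore gives $y\odot_I x'\le y$ and $y\odot_I x'\le x'$, so every element of $y\odot_I x'$ lies in $L(x',y)=\{0\}$ and thus $y\odot_I x'\le 0$. The ``$\Rightarrow$'' half now yields $y\le x'\rightarrow_I 0=\{x\}$, i.e.\ $y\le x$, so $x=y$. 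Hence $\mathbf P$ is paraorthomodular, and being orthogonal and complemented it is an orthomodular poset.

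For \textbf{(iii)}, $\Max L(x,y)\le x,y$ is immediate; and for $s\in x\odot_I y$ the ``$\Leftarrow$'' half applied to $x\le y\rightarrow_I x$ (Theorem~\ref{th1}(i)) and to $x\le y\rightarrow_I y=y\vee y'=1$ gives $s\le x$ and $s\le y$, so $s\in L(x,y)$; mlb-completeness then supplies a maximal lower bound of $\{x,y\}$ above $s$, which is exactly $x\odot_I y\le_1\Max L(x,y)$. For \textbf{(iv)}, if $a\wedge b=a\wedge b'=0$ then, as in~(iii), every element of $a\odot_I b$ lies in $L(a,b)=\{0\}$, so $a\odot_I b\le 0$ and the ``$\Rightarrow$'' half gives $a\le b\rightarrow_I 0=\{b'\}$, i.e.\ $a\le b'$; symmetrically $a\le b$, whence $a\in L(b,b')=\{0\}$ by sharpness and $a=0$. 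The main obstacle is entirely concentrated in the key step above: recognizing that the full two-sided adjunction (unlike the weaker (A)/(B) or $\le_1/\le_2$ versions studied earlier) collapses the structure toward a Boolean one, and that the right probe for this is the degenerate instance $y=0$, where $\rightarrow_I$ computes $t\mapsto t\vee t'$ and forces $t\vee t'=1$.
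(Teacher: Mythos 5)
Your proof is correct, and its engine is the same as the paper's: feed trivial instances of the biconditional (with $\rightarrow_I$ evaluated via Theorem~\ref{th1}(iii)) back through the adjunction. Parts (i), (iii) and (iv) are essentially identical to the paper's. The one genuine difference is in how orthomodularity is reached. You front-load a complementation lemma, probing with $y=0$ to force $t\vee t'=1$ for all $t$, and then verify condition (P) and invoke the equivalence of (P) and (OM) under a complementation. The paper does not need this preliminary step at all: from (i) it has $y\odot_I y'=\{0\}\leq x$ for any $x\leq y$, so the forward half of the adjunction gives $y\leq y'\rightarrow_I x=x\vee\Max L(y,x')=\{x\vee(y\wedge x')\}$, which is (OM) directly in one line; complementation then falls out because orthomodular posets are complemented (and is what licenses $y\rightarrow_I y=\{1\}$ in part (iii)). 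So your ``hardest step'' is actually dispensable, though it is a perfectly sound and arguably more transparent route, since it isolates exactly which degenerate instance of the adjunction collapses $t\vee t'$ to $1$. One shared caveat: both you and the paper pass from $x\odot_I x'\leq 0$ to $x\odot_I x'=\{0\}$, which tacitly assumes the operator is nonempty-valued; your ``minimal solutions'' gloss is not forced by the stated hypothesis, but the paper makes the same implicit assumption, so this is a convention issue rather than a gap in your argument.
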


\begin{proof}
\begin{enumerate}[(i)]
\item Let $x\in P$. Then 
	$x\leq \{x\}= 0\vee \Max L(x,1)=x'\rightarrow_I 0$. Hence also 
	$x\odot_I x'\leq \{0\}$, i.e., $x\odot_I x'=\{0\}$. 
\item Let $x, y\in P$, $x\leq y$. From (i) we know that 
	$y\odot_I y'=\{0\}\leq x$. Therefore 
	$y\leq y'\rightarrow_I x=x\vee \Max L(y, x')=\{x\vee (y\wedge x')\}$. 
	Hence 	$y\leq x\vee (y\wedge x')\leq y$, i.e., 
	$y= x\vee (y\wedge x')$. 
\item  Let $x, y\in P$. By  (ii), Theorem \ref{th1}, (i) and (iii),  and 
the fact that every orthomodular poset is complemented we know that 
		$	x\leq y\rightarrow_I x $ and 	$	x\leq y\rightarrow_I  y=\{y\vee y'\}=\{1\}$, respectively. Hence 
		$	x\odot_I y\leq x$ and $	x\odot_I y\leq y$.  
		We conclude $x\odot_I y\leq_1  \Max L(x,y)$. 	
\item Let $x, y\in P$ such that $x\wedge y=0$ and $x\wedge y'=0$. From (iii) we conclude 
		$x\odot_I y=\{0\}$ and $x\odot_I y'=\{0\}$. By adjointness and 
		Theorem \ref{th1} (iii) we have $x\leq y\rightarrow_I 0=\{y'\}$ and 
		$x\leq y'\rightarrow_I 0=\{y\}$. Since $\mathbf P$  is complemented we obtain $x\leq y\wedge y'=0$. 
\end{enumerate}
\end{proof}
	
\begin{proposition}\label{adjibp}
		Let $\mathbf P=(P,\leq,{}',0,1)$ be an orthogonal 
		 Boolean poset with maximality property.  Then  $\mathbf P$ is a Boolean 
		 algebra.
\end{proposition}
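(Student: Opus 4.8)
The plan is to reduce the statement to the fact recalled earlier that every weakly Boolean orthomodular poset with the maximality property is a Boolean algebra (\cite[Theorem 4.2]{tkadlec}). Since $\mathbf P$ already has the maximality property by hypothesis, it suffices to verify that $\mathbf P$ is an orthomodular poset and that it is weakly Boolean. The starting observation is that, being a Boolean poset, $\mathbf P$ is distributive and complemented, its complementation ${}'$ is the unique antitone involution, and $\mathbf P$ is orthogonal by assumption; in particular $L(x,x')=\{0\}$ and $U(x,x')=\{1\}$ for every $x\in P$.

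First I would show that $\mathbf P$ is orthomodular. Since ${}'$ is a complementation, by the equivalence of (P) and (OM) recalled in Section~\ref{sec: base} it is enough to check (P). So assume $x\leq y$ and $x'\wedge y=0$, that is $L(x',y)=\{0\}$. The distributive identity $L\big(U(x,x'),y\big)\approx LU\big(L(x,y),L(x',y)\big)$, together with $U(x,x')=\{1\}$, rewrites the left-hand side as $L(y)$; on the right-hand side $L(x,y)=L(x)$ because $x\leq y$, while $L(x',y)=\{0\}$, so the right-hand side collapses to $LU\big(L(x)\big)=L(x)$. Hence $L(y)=L(x)$, which forces $y\leq x$ and therefore $x=y$. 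This establishes (P), so (being orthogonal with complementation ${}'$) $\mathbf P$ is an orthomodular poset.

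Next I would verify that $\mathbf P$ is weakly Boolean. Suppose $a\wedge b=a\wedge b'=0$, i.e.\ $L(a,b)=L(a,b')=\{0\}$. Applying the same distributive identity with $U(b,b')=\{1\}$ gives $L(a)=L\big(U(b,b'),a\big)\approx LU\big(L(b,a),L(b',a)\big)=LU(\{0\},\{0\})=\{0\}$, whence $a\in L(a)=\{0\}$ and $a=0$. Thus $\mathbf P$ is weakly Boolean.

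Having shown that $\mathbf P$ is an orthomodular, weakly Boolean poset with the maximality property, \cite[Theorem 4.2]{tkadlec} immediately yields that $\mathbf P$ is a Boolean algebra. The only delicate point is the bookkeeping in the LU-calculus: one must exploit the complementation identities $U(x,x')=\{1\}$ and $U(b,b')=\{1\}$ to turn the distributive law into the desired cancellations, together with the routine facts $L(x,y)=L(x)$ for $x\leq y$ and $LU\big(L(x)\big)=L(x)$; everything else is a direct appeal to the cited theorem.
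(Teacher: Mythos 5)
Your proof is correct and follows essentially the same route as the paper: both arguments use the LU-distributivity identities together with $U(x,x')=\{1\}$ (resp.\ $U(y,y')=\{1\}$) to establish orthomodularity and the weakly Boolean property, and then invoke \cite[Theorem 4.2]{tkadlec}. The only cosmetic difference is that the paper derives the orthomodular law directly in cone form via $LU\big(x,L(x',y)\big)=LU(y)$, whereas you verify (P) and appeal to its recalled equivalence with (OM) for a complementation.
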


\begin{proof}  Recall that $\mathbf P$ is orthomodular. Namely, let $x, y\in P$, 
	$x\leq y$. Since $\mathbf P$ is a distributive poset we can compute:
	\begin{align*}
		LU\big(x, L(x',y)\big)&= L\big(U(x,x'),U(x,y)\big)=%
		L\big(\{1\}, U(y)\big)=LU(y).
	\end{align*}

Let us show that  $\mathbf P$ is a weakly Boolean poset. 
Suppose $x, y\in P$ such that $x\wedge y=0$ and $x\wedge y'=0$. 
Again by distributivity we obtain 
\begin{align*}
	L(x)=L\big(x, U(y,y')\big)&= LU\big(L(x,y),L(x,y')\big)=%
	LU\big(\{0\}, \{0\}\big)=L(0).
\end{align*}
	From \cite[Theorem 4.2]{tkadlec} we conclude that $\mathbf P$ is a Boolean algebra.
\end{proof}	

Finally, we are going to show that $\odot_I$ and $\rightarrow_I$ can form an 
adjoint pair in orthogonal 
mlb-complete posets only in a very specific case, i.e., if the poset in question is a Boolean algebra.

\begin{theorem}\label{adjebp}
	Let $\mathbf P=(P,\leq,{}',0,1)$ be an orthogonal 
	mlb-complete poset. Then  the following conditions are equivalent:
	\begin{enumerate}[{\rm(i)}]
		\item There exists a binary operator  $\odot_I$ on $\mathbf P$
		such that 
		\begin{center}
		$x\odot_I y\leq z$ if and only if 
		$x\leq y\rightarrow_I z$
		\end{center} for all $x, y, z\in P$.
		\item $\mathbf P$ is a Boolean algebra. 
	\end{enumerate}
\end{theorem}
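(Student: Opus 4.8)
The plan is to establish the two implications separately, with the bulk of the work for (i) $\Rightarrow$ (ii) already packaged in Proposition~\ref{adji}.

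For (i) $\Rightarrow$ (ii), I would assume that an adjoint $\odot_I$ exists and invoke Proposition~\ref{adji} directly: its parts (ii) and (iv) assert that $\mathbf P$ is simultaneously an orthomodular poset and weakly Boolean. What remains is to promote this to a full Boolean algebra. Here I would use that $\mathbf P$, being mlb-complete, enjoys the maximality property (as recorded in the preliminaries, $L(a,b)$ always possesses a maximal element in an mlb-complete poset). An orthomodular, weakly Boolean poset with the maximality property is a Boolean algebra by \cite[Theorem~4.2]{tkadlec}, and this finishes the forward direction.

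For (ii) $\Rightarrow$ (i), the first step is to compute $\rightarrow_I$ explicitly in a Boolean algebra. Since $\mathbf P$ is then lattice-ordered, $L(x',y')=L(x'\wedge y')$, so $\Max L(x',y')=\{x'\wedge y'\}$ and therefore $x\rightarrow_I y=y\vee(x'\wedge y')$; by distributivity and $y\vee y'=1$ this collapses to the classical implication $x\rightarrow_I y=x'\vee y$. The natural candidate for the adjoint is $x\odot_I y:=x\wedge y$, and the verification is just the Boolean residuation law: $x\wedge y\leq z$ holds if and only if $x\wedge y\wedge z'=0$, which by De Morgan is equivalent to $x\leq(y\wedge z')'=y'\vee z=y\rightarrow_I z$. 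As $y\rightarrow_I z$ is the singleton $\{y'\vee z\}$, the relation $x\leq y\rightarrow_I z$ is unambiguous and the required equivalence holds.

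I expect no real obstacle inside this statement, since the substantive content has been front-loaded into Proposition~\ref{adji}. The only points demanding a little care are verifying that a Boolean algebra genuinely satisfies the standing hypotheses — it is lattice-ordered, hence mlb-complete, and bounded with an antitone involution given by its unique complementation, hence orthogonal — and keeping the set-versus-element bookkeeping straight, which is harmless here because $\rightarrow_I$ reduces to a singleton in the lattice case.
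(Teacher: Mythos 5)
Your proposal is correct and follows essentially the same route as the paper: the forward direction is Proposition~\ref{adji} (orthomodular and weakly Boolean) combined with the maximality property coming from mlb-completeness and Tkadlec's Theorem~4.2, and the backward direction rests on the same computation showing $x\rightarrow_I y$ collapses to $x'\vee y$ in a Boolean algebra. The only cosmetic difference is that you verify the adjointness of $\wedge$ and $x'\vee y$ directly via Boolean residuation, whereas the paper identifies $\rightarrow_I$ with the Sasaki implication $\rightarrow_S$ and cites Theorem~\ref{omidentity}; both are sound.
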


\begin{proof}
(i) $\Rightarrow$ (ii): \\
From Proposition \ref{adji} we know that $\mathbf P$ is a weakly Boolean poset which has the maximality 
	property. 
	From \cite[Theorem 4.2]{tkadlec} we obtain that $\mathbf P$ is a Boolean algebra. \\
(ii) $\Rightarrow$ (i): \\
Clearly, $x\rightarrow_I y=y\vee (x'\wedge y')=(y\vee x')\wedge (y\vee y')%
=y\vee x'=(x'\vee x)\wedge (y\vee x')=x'\vee (x\wedge y)=x\rightarrow_S y$ 
and $x\odot_S y=y\wedge (x\vee y')=(y\wedge x)\vee (y\wedge y')=x\wedge y$. 

Since $\mathbf P$ is an orthomodular lattice from Theorem \ref{omidentity} we immediately obtain that  
\begin{center}
	$x\odot_S y\leq z$ if and only if 
	$x\leq y\rightarrow_S z$  if and only if 
	$x\leq y\rightarrow_I z$
\end{center} for all $x, y, z\in P$. Hence $\odot_I$ exists, and  $\odot_I=\odot_S=\wedge$ and $x\rightarrow_I y=x\rightarrow_S y=y\vee x'$ realize the required adjointness. 
\end{proof} 

\begin{corollary}
	Let   $\mathbf L=(L,\vee,\wedge,{}',0,1)$ be a bounded lattice with an antitone involution. 
	Then the following conditions are equivalent:
	\begin{enumerate}[{\rm(i)}]
		\item There exists a binary operation  $\odot_I$ 
		such that $x\odot_I y\leq z$ if and only if 
		$x\leq y\rightarrow_I z$ for all $x, y, z\in L$.
		\item $\mathbf L$ is a Boolean algebra. 
	\end{enumerate}
\end{corollary}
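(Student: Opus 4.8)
The plan is to obtain the Corollary as a direct specialization of Theorem~\ref{adjebp} to the lattice case, so the only real work is to check that the hypotheses transfer and to reconcile the two slightly different phrasings of the adjunction. First I would verify that a bounded lattice $\mathbf L=(L,\vee,\wedge,{}',0,1)$ with an antitone involution, regarded as a poset, satisfies all the assumptions of Theorem~\ref{adjebp}. It is orthogonal, since in a lattice every pair $x,y$ (in particular every pair with $x\perp y$) has a supremum $x\vee y$; and it is mlb-complete because, as recorded in Section~\ref{sec: base}, every lattice is mlub-complete. Thus $\mathbf L$ is an orthogonal mlb-complete poset and Theorem~\ref{adjebp} applies to it.

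Next I would note that in the lattice setting the operator $\rightarrow_I$ from {\rm(I1)} collapses to the single-valued operation {\rm(I2)}: since $\Max L(x',y')=\{x'\wedge y'\}$, we have $x\rightarrow_I y=y\vee(x'\wedge y')$. Consequently the adjunction $x\leq y\rightarrow_I z$ reads simply as the lattice inequality $x\leq z\vee(y'\wedge z')$, with no ambiguity coming from subset-valuedness on the right-hand side.

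The one point that requires a moment's care is reconciling the phrase \emph{binary operator} in Theorem~\ref{adjebp} with \emph{binary operation} in the Corollary. Because $\rightarrow_I$ is now single-valued, the adjoint is forced to be single-valued as well: by the general remark opening Section~\ref{sec: adj}, whenever the adjunction holds the value $x\odot_I y$ is the least $z$ with $x\leq y\rightarrow_I z$, and such a least element is unique when it exists. Hence any $\odot_I$ satisfying the adjunction is in fact a genuine binary operation, so the two formulations coincide here. Concretely, in the proof of Theorem~\ref{adjebp} the Boolean case exhibits the adjoint as $\odot_I=\odot_S=\wedge$, which is single-valued.

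With these identifications in place the two implications transfer verbatim. If a binary operation $\odot_I$ with the stated adjunction exists, it is in particular a binary operator satisfying condition~(i) of Theorem~\ref{adjebp}, whence $\mathbf L$ is a Boolean algebra. Conversely, if $\mathbf L$ is a Boolean algebra, Theorem~\ref{adjebp} supplies the adjoint, which the same argument realizes as the meet $\wedge$, a genuine binary operation. I expect the only obstacle to be the bookkeeping around single- versus subset-valued $\odot_I$; once the single-valuedness of $\rightarrow_I$ in a lattice is observed, everything else is immediate.
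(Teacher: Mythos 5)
Your proposal is correct and matches the paper's (implicit) argument: the corollary is stated without proof precisely because it is the specialization of Theorem~\ref{adjebp} to lattices, using that every bounded lattice with an antitone involution is an orthogonal mlb-complete poset and that $\rightarrow_I$ becomes the single-valued operation (I2) there. Your extra care about ``operator'' versus ``operation'' is a sensible bit of bookkeeping but introduces nothing beyond what the paper intends.
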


\subsection*{Acknowledgements}
Ivan Chajda and Helmut L\"anger gratefully acknowledge the Austrian Science Fund (FWF), project I~4579-N, and the Czech Science Foundation (GA\v CR), project 20-09869L, entitled ``The many facets of orthomodularity''. Antonio Ledda gratefully acknowledges the support of MIUR within the project PRIN 2017:  “Logic and cognition. Theory, experiments, and applications”, CUP: 2013YP4N3.

Authors' addresses:

Ivan Chajda \\
Palack\'y University Olomouc \\
Faculty of Science \\
Department of Algebra and Geometry \\
17.\ listopadu 12 \\
771 46 Olomouc \\
Czech Republic \\
ivan.chajda@upol.cz

Davide Fazio \\
Universit\`a degli Studi di Teramo \\
Facolt\`a di Science della Comunicazione \\
Campus ``Aurelio Saliceti'' \\
Via R.\ Balzarini 1 \\
64100 Teramo \\
Italy \\
dfazio2@unite.it

Helmut L\"anger \\
TU Wien \\
Faculty of Mathematics and Geoinformation \\
Institute of Discrete Mathematics and Geometry \\
Wiedner Hauptstra\ss e 8-10 \\
1040 Vienna \\
Austria, and \\
Palack\'y University Olomouc \\
Faculty of Science \\
Department of Algebra and Geometry \\
17.\ listopadu 12 \\
771 46 Olomouc \\
Czech Republic \\
helmut.laenger@tuwien.ac.at

Antonio Ledda \\
University of Cagliari \\
A.\ Lo.\ P.\ Hi.\ S Research Group \\
Via Is Mirrionis 1 \\
09123 Cagliari \\
Italy \\
antonio.ledda@unica.it

Jan Paseka \\
Masaryk University \\
Faculty of Science \\
Department of Mathematics and Statistics \\
Kotl\'a\v rsk\'a 2 \\
611 37 Brno \\
Czech Republic \\
paseka@math.muni.cz

\begin{thebibliography}{99}
\bibitem{abramsky}
S.~Abramsky and A. Jung, Domain theory. In Handbook of Logic in Computer Science, 
Vol. III, Oxford Univ.\ Press, 1994, 1--168.
\bibitem{B}
L.~Beran, Orthomodular Lattices. Algebraic Approach. Reidel, Dordrecht 1985. ISBN 90-277-1715-X.
\bibitem{BirkhoffvonNeumann}
G.~Birkhoff and J.~von~Neumann, {The logic of quantum mechanics}. {Annals of Mathematics}, {\bf37} (4), (1936), 823--843.
\bibitem{CF}
I.~Chajda and D.~Fazio, On residuation in paraorthomodular lattices. Soft Computing {\bf24} (2020), 10295--10304.
\bibitem{CFLLP}
I.~Chajda, D.~Fazio, H.~L\"anger, A.~Ledda and J.~Paseka, Algebraic properties of paraorthomodular posets. Logic J.\ IGPL {\bf30} (2022), 840--869.
\bibitem{CL21a}
I.~Chajda and H.~L\"anger, How to introduce the connective implication in orthomodular posets. Asian-Eur.\ J.\ Math.\ {\bf14} (2021), 2150066-1 -- 2150066-8.
\bibitem{CL21b}
I.~Chajda and H.~L\"anger, Residuation in finite posets. Math.\ Slovaca {\bf71} (2021), 807--820.
\bibitem{CL22}
I.~Chajda and H.~L\"anger, The logic of orthomodular posets of finite height.	Log.\ J.\ IGPL {\bf30} (2022), 143--154.
\bibitem{Chovanec}
F.~Chovanec and M.~Jure\v{c}kov\'a, MV-algebra pasting. \emph{International Journal of Theoretical Physics}. {\bf42}  (2003), 1913--1926.
\bibitem{Czela1975a}
J.~Czelakowski, Logics based on partial Boolean $\sigma$-algebras. I. {Studia Logica}, {\bf 33} (4), (1974), 371--396.
\bibitem{Czela1975}
J.~Czelakowski, Logics based on partial Boolean $\sigma$-algebras. II. {Studia Logica}, {\bf 34} (1), (1975), 69--86.
\bibitem{Czela1979}
J.~Czelakowski, Partial Boolean algebras in a broader sense. {Studia Logica}, {\bf 38} (1), (1979), 1--16.
\bibitem{FLP}
D.~Fazio, A.~Ledda and F.~Paoli, On Finch's conditions for the completion of orthomodular posets. Found.\ Sci.\ (2020).
\bibitem F
P.~D.~Finch, On orthomodular posets. J.\ Austral.\ Math.\ Soc.\ {\bf11} (1970), 57--62.
\bibitem{GLP16}
R.~Giuntini, A.~Ledda and F.~Paoli, A new view of effects in a Hilbert space. Studia Logica {\bf104} (2016), 1145--1177.
\bibitem{GLP17}
R.~Giuntini, A.~Ledda and F.~Paoli, On some properties of PBZ*-lattices. Internat.\ J.\ Theoret.\ Phys.\ {\bf56} (2017), 3895--3911.
\bibitem{Greechie1}
R.~J. Greeehie, Orthomodular lattices admitting no states, Journ.\ Comb.\ Theory {\bf10} (1971), 119--132.
\bibitem K
G.~Kalmbach, Orthomodular Lattices, Academic Press, London 1983. ISBN 0-12-394580-1.
\bibitem{KochenSpecker2}
S.~Kochen, and E.P.~Specker, Logical structures arising in quantum theory, in J.W.~Addison, L.~Henkin and A.~Tarski (eds.), \emph{The Theory of Models}, in Studies in Logic and the Foundations of Mathematics, North-Holland, Amsterdam 1965, 177--189.
\bibitem{Pulman}
S,~Pulmannov\'a, Compatibility and partial compatibility
in quantum logics, Annales de l’I. H. P., section A, {\bf 34} (1981), 391--403.
\bibitem{Suppes}
P. Suppes, Logics appropriate to empirical theories, in \emph{Symposium on the Theory of Models}, Proceedings of the 1963 International Symposium at Berkeley, North-Holland, Amsterdam, 1965, 364--375
\bibitem{tkadlec}
J.~Tkadlec, Conditions that force an orthomodular poset to be a Boolean algebra. Tatra Mountains Mathematical Publications {\bf10} (1997), 55--62.
\bibitem{xie}
Y.~Xie, Y.~Li and A.~Yang, The pasting constructions of lattice ordered effect algebras, \emph{Information Sciences,} {\bf180}(12) (2010), 2476--2486.
\end{thebibliography}
\end{document}